\documentclass[reqno]{amsart}

\usepackage{amssymb,amsmath}
\usepackage{mathrsfs}

\usepackage[thinlines]{easytable}
\usepackage{longtable}
\usepackage{array}
\usepackage{accents}
\usepackage{relsize}
\usepackage{diagbox}

\usepackage{a4wide}
\usepackage{graphicx}
\usepackage{hyperref}
\usepackage[mathscr]{euscript}

\usepackage{mathtools}

\usepackage{subfigure}
\usepackage{calc}
\usepackage{tikz}

\usetikzlibrary{cd,arrows,matrix}
\usepackage{newtxtext}
\usepackage{newtxmath}
\usepackage{anyfontsize}
\usepackage{bm}

\usepackage[english]{babel}

\usepackage[makeroom]{cancel}

\hypersetup{colorlinks=true,linkcolor=blue}

\usepackage{enumitem}
\usepackage{multicol}

\numberwithin{equation}{section}

\theoremstyle{plain}
\newtheorem{theorem}{Theorem}[section]
\newtheorem{lemma}[theorem]{Lemma}
\newtheorem{proposition}[theorem]{Proposition}
\newtheorem{corollary}[theorem]{Corollary}

\newtheorem{definition/proposition}[theorem]{Definition/Proposition}
\newtheorem{theorem/definition}[theorem]{Theorem/Definition}
\newtheorem*{bigtheorem*}{Main Theorem}

\theoremstyle{definition}
\newtheorem{definition}[theorem]{Definition}
\newtheorem{example}[theorem]{Example}
\newtheorem{example/definition}[theorem]{Example/Definition}

\newtheorem{conjecture}[theorem]{Conjecture}
\newtheorem*{conjecture*}{Conjecture}

\theoremstyle{remark}
\newtheorem{remark}[theorem]{Remark}

\makeatletter
\newcommand{\setword}[2]{%
  \phantomsection
  #1\def\@currentlabel{\unexpanded{#1}}\label{#2}%
}
\makeatother

\newcommand{\field}{\mathbf{k}}

\newcommand{\GL}{\mathrm{GL}}

\newcommand{\Gr}{\mathrm{Gr}}

\newcommand{\bZ}{\mathbb{Z}}

\newcommand{\cC}{\mathcal{C}}

\newcommand{\std}{\mathrm{std}}

\newcommand{\modulispace}{\mathcal{M}}

\newcommand{\Betti}{\mathrm{B}}

\newcommand{\Dol}{\mathrm{Dol}}

\newcommand{\NAH}{\mathrm{NAH}}

\newcommand{\sing}{\mathrm{sing}}

\newcommand{\Hilb}{\mathrm{Hilb}}

\newcommand{\cO}{\mathcal{O}}

\newcommand{\coeff}{\mathrm{coeff}}

\def\Linkplus{\tikz[baseline=0.5ex,scale=0.35]{
\draw[->,thick] (1,0) to[out=135,in=-45] (0,1);
\draw[white, fill=white] (0.5,0.5) circle (0.2);
\draw[->,thick] (0,0) to[out=45,in=-135] (1,1);
}
}

\def\Linkminus{\tikz[baseline=0.5ex,scale=0.35]{
\draw[->,thick] (0,0) to[out=45,in=-135] (1,1);
\draw[white, fill=white] (0.5,0.5) circle (0.2);
\draw[->,thick] (1,0) to[out=135,in=-45] (0,1);
}
}

\def\Linkzero{\tikz[baseline=0.5ex,scale=0.35]{
\draw[->,thick] (0,0) to[out=90,in=-90] (0.3,0.5) to[out=90,in=-45] (0,1);
\draw[->,thick] (1,0) to[out=135,in=-90] (0.7,0.5) to[out=90,in=-135] (1,1);
}
}

\newcommand{\tb}{\mathrm{tb}}

\newcommand{\cP}{\mathcal{P}}

\newcommand{\type}{\bm{\mu}}

\newcommand{\HLRV}{\mathbb{H}}

\newcommand{\Br}{\mathrm{Br}}

\newcommand{\rank}{\mathrm{rank}}

\newcommand{\Aug}{\mathrm{Aug}}

\newcommand{\modulistack}{\mathfrak{M}}

\newcommand{\diag}{\mathrm{diag}}

\newcommand{\cE}{\mathcal{E}}
\newcommand{\cF}{\mathcal{F}}

\newcommand{\pa}{\mathrm{par}}

\newcommand{\Res}{\mathrm{Res}}

\newcommand{\pardeg}{\mathrm{pardeg}}

\newcommand{\Spec}{\mathrm{Spec}}

\newcommand{\Eu}{\mathrm{Eu}}

\newcommand{\ms}{\mathrm{s}}

\newcommand{\cW}{\mathcal{W}}

\newcommand{\NR}{\mathrm{NR}}

\begin{document}

\title[Log-concavity from enumerative geometry of planar curve singularities]{Log-concavity from enumerative geometry of planar curve singularities}

\author{Tao Su}
\email{sutao@bimsa.cn}
\address{Beijing Institute of Mathematical Sciences and Applications, Beijing, China}

\author{Baiting Xie}
\email{xbt23@mails.tsinghua.edu.cn}
\address{Tsinghua University, Beijing, China}

\author{Chenglong Yu}
\email{yuchenglong@simis.cn}
\address{Center for Mathematics and Interdisciplinary Sciences, Fudan University and
Shanghai Institute for Mathematics and Interdisciplinary Sciences (SIMIS), Shanghai, China}

\date{}

\subjclass[2020]{Primary: 14N10, 14H20; Secondary: 57K10, 14F43, 14D15.}

\keywords{planar curve singularities; BPS invariants; log-concavity; Severi strata; local Euler obstructions; Legendrian links; ruling polynomials; character varieties; $E$-polynomials}

\begin{abstract}
We propose a log-concavity conjecture for BPS invariants arising in the enumerative geometry of planar curve singularities, identified with the local Euler obstructions of Severi strata in their versal deformations. We further extend this conjecture to ruling polynomials of Legendrian links and to $E$-polynomials of character varieties. We establish these conjectures for irreducible weighted-homogeneous singularities (torus knots) and for ADE singularities, and prove a multiplicative property for ruling polynomials compatible with log-concavity.
\end{abstract}

\maketitle

\tableofcontents

\section*{Introduction}

Log-concavity is a pervasive phenomenon in combinatorics, algebraic geometry, and representation theory. 
A landmark example is the log-concavity of the coefficients of chromatic polynomials of graphs, proved by Huh using Hodge-theoretic methods  \cite{Huh12}; see also \cite{AHK18} for a
far-reaching generalization. More broadly, log-concavity often reflects the presence of hidden geometric structures, such as hard Lefschetz theorems or intersection-theoretic inequalities.

Given a sequence of nonnegative real numbers $a_0,a_1,\cdots,a_n$, recall the following basic notions. The sequence is \textbf{log-concave} if $a_j^2 \ge a_{j-1}a_{j+1}$ for all $1 \le j \le n-1$; it has \textbf{no internal zeros} if there do not exist $i<j<k$ such that $a_i,a_k > 0$ and $a_j = 0$; and it is \textbf{unimodal} if there exists $j$ such that 
\[
a_0 \le a_1 \le \cdots \le a_j \ge a_{j+1} \ge \cdots \ge a_n.
\]
We recall the following elementary properties:
\begin{itemize}[wide,labelwidth=!,labelindent=0pt,itemindent=!]
\item
Any log-concave sequence with no internal zeros is unimodal.

\item
(\textbf{Multiplicativity}) Suppose that 
\[
\sum_{k=0}^K c_k w^k = \left(\sum_{i=0}^I a_i w^i\right)\left(\sum_{j=0}^J b_j w^j\right).
\]
If $(a_i)$ and $(b_j)$ are both log-concave with no internal zeros, then so is $(c_k)$.
\end{itemize}

In this paper, we propose and study a new source of log-concavity arising from enumerative geometry of planar curve singularities, together with several extensions and variations, including ruling polynomials of Legendrian links and $E$-polynomials of character varieties. Throughout the paper, we work over $\field = \mathbb{C}$ unless otherwise specified.

%In particular, these results suggest that log-concavity is a manifestation of a common geometric structure underlying curve singularities, Legendrian topology, and non-abelian Hodge theory.

\subsection*{Main results}
\addtocontents{toc}{\protect\setcounter{tocdepth}{1}}

We summarize the main results of this paper:
\begin{itemize}[wide,labelwidth=!,labelindent=0pt,itemindent=!]
\item we formulate a log-concavity conjecture (Conjecture \ref{conj:log-concavity_for_local_BPS_invariants}) for BPS invariants associated with planar curve singularities;

\item we extend this conjecture to ruling polynomials of Legendrian links (Conjecture \ref{conj:log-concavity_of_ruling_polynomials}) and to $E$-polynomials of character varieties (Conjecture \ref{conj:log-concavity_of_E-polynomials_of_character_varieties});

\item we verify these conjectures for irreducible weighted-homogeneous and ADE singularities (Theorem \ref{thm:log-concavity_for_torus_knots}, Theorem \ref{thm:log-concavity_for_ADE});

\item we establish a multiplicative property for ruling polynomials (Proposition \ref{prop:multiplicativity_for_ruling_polynomials}) compatible with log-concavity.
\end{itemize}

\addtocontents{toc}{\protect\setcounter{tocdepth}{2}}

\section{BPS invariants in the enumerative geometry of planar curve singularities}

Our first object of study is the BPS invariants arising in the enumerative geometry of planar curve singularities, together with their global analogue.
See \cite{She12} for background.

Let $(C,0)$ be a reduced planar curve singularity over $\field$, and let $\delta$ be its $\delta$-invariant.
Let $R=\widehat{\cO}_{C,0}$ be the complete local ring, and let $\widetilde R$ be its normalization. Then
\[
\delta=\delta(C,0)=\dim_{\field}(\widetilde R/R).
\]
Let $\mu=\mu(C,0)$ be the Milnor number, and let $b=b(C,0)$ be the number of local branches of $(C,0)$. Then
\[
\mu=2\delta+1-b.
\]

Let $\Hilb^{[\tau]}=\Hilb^{[\tau]}(C,0)$ be the punctual Hilbert scheme of $\tau$ points on $(C,0)$, namely
\[
\Hilb^{[\tau]}:=\{I\subset R \mid I \text{ is an ideal and } \dim_{\field}(R/I)=\tau\}.
\]
By \cite[Def.~7, Cor.~10]{She12}, there exist unique integers $n_h(C,0)$, $0\leq h\leq \delta$, such that
\begin{equation}\label{eqn:local_BPS_invariants}
q^{-\delta}(1-q)^b \sum_{\tau \ge 0}\chi(\Hilb^{[\tau]})q^{\tau}
=
\sum_{h=0}^{\delta}n_h(C,0)\,z^{2h},
\qquad z:=q^{1/2}-q^{-1/2}.
\end{equation}
This expansion is called the \textbf{genus expansion} of the generating function of Euler characteristics of Hilbert schemes of points on $(C,0)$.
Following \cite[\S6]{She12} and \cite[Appendix~B]{PT10}, we refer to $n_h(C,0)$ as the \textbf{$h$-th BPS invariant} of $(C,0)$.

\begin{conjecture}[Main conjecture]\label{conj:log-concavity_for_local_BPS_invariants}
The sequence $n_0(C,0),\cdots,n_{\delta}(C,0)$ is log-concave with no internal zeros.
In particular, it is unimodal.
\end{conjecture}

\medskip

There is also a global analogue.
Let $C$ be an integral projective locally planar curve over $\field$.
Let $g=g(C)$ and $\tilde{g}=\tilde{g}(C)$ denote its arithmetic and geometric genera, respectively; equivalently, if $\tilde{C}\to C$ is the normalization, then $\tilde{g}$ is the genus of $\tilde{C}$.
Write $\delta(C):=g-\tilde{g}$.

Let $\Hilb^{[\tau]}(C)$ be the Hilbert scheme of $\tau$ points on $C$.
By \cite[Def.~2, Cor.~11]{She12}, there exist unique integers $n_h(C)$\footnote{In Shende's convention, his ``$n_h(C)$'' corresponds to our ``$n_{h-\tilde{g}}(C)$''.}, $0\leq h\leq \delta(C)$, such that
\begin{equation}\label{eqn:global_BPS_invariants}
q^{-g}(1-q)^2 \sum_{\tau \ge 0}\chi(\Hilb^{[\tau]}(C))q^{\tau}
=
\sum_{h=0}^{\delta(C)}n_h(C)\,z^{2(\tilde{g}+h)},
\qquad z:=q^{1/2}-q^{-1/2}.
\end{equation}
We call $n_h(C)$ the \textbf{$h$-th BPS invariant} of $C$.

\begin{conjecture}\label{conj:log-concavity_for_global_BPS_invariants}
The sequence $n_0(C),\cdots,n_{\delta(C)}(C)$ is log-concave with no internal zeros.
In particular, it is unimodal.
\end{conjecture}

\begin{lemma}
Conjecture \ref{conj:log-concavity_for_local_BPS_invariants} holds if and only if Conjecture \ref{conj:log-concavity_for_global_BPS_invariants} holds.
\end{lemma}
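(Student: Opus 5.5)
The plan is to relate the global generating function to the local ones by a product formula, and then use multiplicativity of log-concave sequences in both directions.

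\textbf{Step 1: product formula.} For an integral projective locally planar curve $C$ with singular points $p_1,\dots,p_k$, the Hilbert scheme of points decomposes, after taking Euler characteristics, into a contribution from the smooth locus and punctual contributions at each $p_i$. Concretely, I expect (as in \cite{She12}) an identity
\[
q^{-g}(1-q)^2\sum_{\tau}\chi(\Hilb^{[\tau]}(C))q^\tau
= z^{2\tilde g}\prod_{i=1}^k\Bigl(q^{-\delta_i}(1-q)^{b_i}\sum_{\tau}\chi(\Hilb^{[\tau]}(C,p_i))q^\tau\Bigr),
\]
where $\delta_i=\delta(C,p_i)$, $b_i$ is the number of branches at $p_i$, and $\sum_i\delta_i=\delta(C)$. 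To prove it, I will stratify by how the points are distributed. The smooth part contributes $\prod_i(1-q)^{-1}$ for each removed branch point. The normalization $\tilde C$ contributes $(1-q)^{-2}\cdot(\text{numerator})$, and this numerator becomes $z^{2\tilde g}$ after the normalization $q^{-\tilde g}$ is applied; it is the Euler-characteristic avatar of Macdonald's formula $\chi=(1-q)^{2\tilde g-2}$, suitably symmetrized. Bookkeeping the exponents $g=\tilde g+\sum\delta_i$ then gives the identity. Combining it with \eqref{eqn:local_BPS_invariants} and \eqref{eqn:global_BPS_invariants} yields
\[
\sum_{h} n_h(C)w^h=\prod_{i=1}^k\Bigl(\sum_h n_h(C,p_i)w^h\Bigr),\qquad w=z^2 .
\]
Uniqueness of the expansions in powers of $z^2$ justifies comparing coefficients.

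\textbf{Step 2: local implies global.} If Conjecture~\ref{conj:log-concavity_for_local_BPS_invariants} holds, then each factor has nonnegative, log-concave coefficients without internal zeros. The multiplicativity property recalled in the introduction then gives the same property for $(n_h(C))$. One detail to check is that the local sequences are nonnegative and nonzero, for example $n_\delta(C,p)=1$, the multiplicity-one top term.

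\textbf{Step 3: global implies local.} This direction is the main obstacle, since a product being log-concave does not force its factors to be. I will bypass it by realization. Given any reduced planar germ $(C,0)$, I will construct an integral projective plane curve whose only singularity is analytically equivalent to $(C,0)$. This uses finite determinacy: take a polynomial truncation of high degree of a defining equation, homogenize, and add a generic term of high degree vanishing to high order at $0$. Bertini then makes the curve smooth elsewhere and irreducible. Since $\chi(\Hilb^{[\tau]}(C,0))$ depends only on the analytic (even topological) type, the global BPS polynomial of this curve equals the local one, by Step 1 with $k=1$. Conjecture~\ref{conj:log-concavity_for_global_BPS_invariants} then gives Conjecture~\ref{conj:log-concavity_for_local_BPS_invariants}.

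The care needed in Step 3 is to ensure that irreducibility and smoothness away from $0$ can be achieved simultaneously. I would handle this with a standard degree argument: the linear system of degree-$d$ curves containing the fixed high jet at $0$ is base-point free away from $0$ for $d\gg0$.
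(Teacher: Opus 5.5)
Your proposal is correct and is essentially the paper's proof. Local $\Rightarrow$ global uses the product formula $\sum_h n_h(C)w^h=\prod_i\sum_h n_h(C,p_i)w^h$ plus multiplicativity; the paper cites this formula as \cite[Prop.~8]{She12}, and you rederive it by stratifying the Hilbert scheme. In that rederivation, note that each removed preimage point contributes a factor $(1-q)^{+1}$, not $(1-q)^{-1}$, since $\chi(C^{\mathrm{sm}})=2-2\tilde g-\sum_i b_i$; your displayed identity is nevertheless correct. For global $\Rightarrow$ local, the only variation is that you realize $(C,0)$ as the unique singularity of a plane curve of arbitrary geometric genus rather than a rational curve, which is harmless because the factor $z^{2\tilde g}$ in \eqref{eqn:global_BPS_invariants} absorbs the genus.
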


\begin{proof}
`$\Leftarrow$'.
Any reduced planar curve singularity $(C,0)$ can be realized as the unique singularity of an integral rational projective curve $C$.
Then by \cite[Prop.~8]{She12}, $n_h(C)=n_h(C,0)$.

\smallskip

`$\Rightarrow$'.
Let $\{c_i\}_{i\in I}$ be the singular points of $C$.
Again by \cite[Prop.~8]{She12}, we have
\[
n_h(C)=\sum_{\sum_{i\in I}h_i=h}\prod_{i\in I} n_{h_i}(C,c_i).
\]
The claim then follows from the multiplicativity of log-concavity.
\end{proof}

\medskip

There is a further geometric interpretation of the BPS invariants.
Let $\pi:\cC\to S$ be a projective flat family of integral locally planar curves over a smooth base such that $C=\cC_0$.
Suppose that $\pi$ is versal at $0\in S$, i.e., $\pi:(\cC,\sing(\cC_0))\to (S,0)$ is a versal deformation of $(C,\sing(C))$, and let $\pi:\cC\to S$ be a small representative.

The \textbf{global Severi strata} are defined by
\[
S_h:=\overline{\{s\in S \mid \cC_s \text{ is nodal with } \delta(C)-h \text{ nodes}\}}.
\]
By \cite[Thm.~47]{GS14}, we have
\[
n_h(C)=\Eu_{S_h}(0),
\]
where $\Eu_{S_h}(0)$ denotes the local Euler obstruction at $0$.
In particular, if $C$ is rational and $(C,0)$ is its unique singularity, then
\[
n_h(C,0)=\Eu_{V_h}(0),
\]
where $V_h = V_h(C,0)$ denotes the $h$-th \textbf{local Severi stratum} in a versal deformation $\cC \to V$ of $(C,0)$, i.e.,
\[
V_h := \overline{\{v \mid \cC_v \text{ is nodal with } \delta-h \text{ nodes}\}}.
\]

\begin{remark}
It is known that $S_h$ has pure codimension $\delta(C)-h$.
Its multiplicity at $0$, denoted $\deg_0(S_h)$, is called the $h$-th global Severi degree.

We thank V.~Shende for kindly pointing out an issue\footnote{More precisely, \cite[Lem.~18]{She12} is not correct as stated.} in the proof of the main result of \cite{She12}, and therefore we avoid using the identity $n_h(C)=\deg_0(S_h)$.
To the best of our knowledge, no counterexample to this identity is currently known.

Nevertheless, one still has the following multiplicativity property.
Let $\{c_i\}_{i\in I}$ be the singular points of $C$, and let $V(c_i)$ be the base of the miniversal deformation of $(C,c_i)$ with local Severi strata $V_{h_i}(c_i)$.
Then there exists a smooth morphism
\[
p:S\to\prod_{i\in I}V(c_i)
\]
such that
\[
S_h=p^{-1}\!\left(\prod_{h=\sum h_i}V_{h_i}(c_i)\right),
\]
and hence
\[
\deg_0(S_h)=\sum_{h=\sum h_i}\prod_{i\in I}\deg_0\bigl(V_{h_i}(c_i)\bigr).
\]
See \cite[p.~532]{She12}.
This is again compatible with the multiplicativity of log-concavity.
\end{remark}

\section{Ruling polynomials of Legendrian links}

Let $\Lambda \hookrightarrow (\mathbb{R}^3_{x,y,z},\alpha_{\std} = dz - ydx)$ be an oriented Legendrian link in the standard contact three-space.
Let $\pi_{xz}:\mathbb{R}^3_{x,y,z} \to \mathbb{R}^2_{x,z}$ denote the front projection.
Let $R_{\Lambda}(z) \in \mathbb{Z}_{\geq 0}[z^{\pm 1}]$ be the \textbf{$\mathbb{Z}/2$-graded ruling polynomial} of $\Lambda$ \cite[Def.~2.3]{HR15}.

Define the \textbf{normalized $\mathbb{Z}/2$-graded ruling polynomial} by
\[
\widetilde{R}_{\Lambda}(z):= z^b R_{\Lambda}(z),
\]
where $b := |\pi_0(\Lambda)|$ is the number of connected components of $\Lambda$.
It is known that $\widetilde{R}_{\Lambda}(z)\in \mathbb{Z}_{\geq 0}[z^2]$.

There is an alternative description of $R_{\Lambda}(z)$ when $\Lambda = \beta^{>}$ is the rainbow closure \cite[\S 6.1]{STZ17} of a positive braid $\beta$.

Write $\beta = \sigma_{i_1}\cdots \sigma_{i_{\ell}} \in \Br_n^+$, 
where $\sigma_{i_1}$ is applied first (i.e., the braid is read from left to right).
We label the $n$ parallel strands from bottom to top by $1,2,\ldots,n$. 
Let $W = S_n$ be the symmetric group on $[n] = \{1,\ldots,n\}$, and let $w_0 \in W$ be the longest element, i.e., $w_0(i) = n+1-i$. 
For $1 \le k \le n-1$, let $\ms_k := (k~k+1) \in W$, and define
\[
\ms_{\beta} := \ms_{i_{\ell}} \circ \cdots \circ \ms_{i_1} \in W.
\]
Then $|\pi_0(\beta^{>})|$ is equal to the number of cycles (including $1$-cycles) of $\ms_{\beta}$.

\begin{definition}[{\cite[\S 5.4]{Mel25}}]\label{def:walks}
Let $p=(p_{\ell},\ldots,p_0)\in W^{\ell+1}$ with $p_0 = p_{\ell} = w_0$. 
We say that $p$ is a \emph{$w_0$-$w_0$-walk} of $\beta$ if for each $1 \le m \le \ell$,
\[
p_m =
\begin{cases}
\ms_{i_m} p_{m-1} & \text{if } \ms_{i_m} p_{m-1} > p_{m-1},\\
\ms_{i_m} p_{m-1} \ \text{or}\ p_{m-1} & \text{if } \ms_{i_m} p_{m-1} < p_{m-1},
\end{cases}
\]
where $>$ denotes the Bruhat order on $W$.
Let $S_p := \{\, m \mid p_m = p_{m-1} \,\}$, and denote by $\cW(\beta)$ the set of all such walks. 
\end{definition}

\begin{remark}\label{rem:walks}
Let $w_0 = \ms_{j_{\binom{n}{2}}}\circ \cdots \circ \ms_{j_1}$. Then 
\[
\Delta := \sigma_{j_{\binom{n}{2}}}\circ \cdots \circ \sigma_{j_1} \in \Br_n^+
\]
is the half-twist lifting $w_0$. Set $\tilde{\beta} := \Delta\beta\Delta$. So, $\ell(\tilde{\beta}) = \ell + n(n-1)$.

A direct check shows that a $w_0$-$w_0$-walk $p$ of $\beta$ corresponds to a walk $\tilde{p}$ of $\Delta\beta\Delta$ in \cite[Def.~3.3]{Su25} via:
\[
\tilde{p}_m := \ms_{j_m}\cdots\ms_{j_1} \quad (0 \le m \le \tbinom{n}{2}),\qquad
\tilde{p}_{\binom{n}{2}+m} := p_m \quad (0 \le m \le \ell),
\]
\[
\tilde{p}_{\binom{n}{2}+\ell+m} := \ms_{j_m}\cdots\ms_{j_1} w_0 \quad (0 \le m \le \tbinom{n}{2}).
\]
See also Section \ref{subsec:BPS_invariants_vs_E-polynomials} for a geometric interpretation of $\cW(\beta)$.
\end{remark}

There is a canonical bijection between $\cW(\beta)$ and the set $\NR(\beta^{>})$ of ($\mathbb{Z}/2$-graded) normal rulings of $\beta^{>}$. 
To describe it, label the upper $n$ strands of $\beta^{>}$ (i.e., the strands at the top of $\beta$) from bottom to top by $n+1,\ldots,2n$. 
Given $p=(p_{\ell},\ldots,p_0)\in \cW(\beta)$, the corresponding ruling $\rho$ is determined as follows: 
at each position $0 \le m \le \ell$, the strand $n+j$ is paired with $p_m(j)$.\footnote{
If $0<m\le \ell$, then $m$ denotes the vertical line immediately to the right of $\sigma_{i_m}$; if $m=0$, it denotes the vertical line immediately to the left of $\sigma_{i_1}$.}

Moreover, for each $1 \le m \le \ell$, we have $m \in S_p$ if and only if the crossing $\sigma_{i_m}$ is a switch of $\rho$. 
In particular, $|S_p| = |S(\rho)|$, and hence
\begin{equation}\label{eqn:ruling_polynomial_of_rainbow_closure_of_positive_braid}
R_{\beta^{>}}(z) = \sum_{p \in \cW(\beta)} z^{|S_p| - n}.
\end{equation}

\begin{conjecture}\label{conj:log-concavity_of_ruling_polynomials}
Let $\Lambda = \beta^{>}$ be the rainbow closure \cite[\S 6.1]{STZ17} of a positive braid $\beta$.
Write
\[
\widetilde{R}_{\Lambda}(z) = \sum_{j=0}^{\delta} a_j z^{2j}.
\]
Then the sequence $(a_0,\ldots,a_{\delta})$ is log-concave with no internal zeros.
In particular, it is unimodal.
\end{conjecture}

\begin{example}\label{ex:computation_of_ruling_polynomial}
Let $\Lambda = (\sigma_1^2\sigma_2^2\sigma_3^2\sigma_4^2\sigma_3^2\sigma_2\sigma_1)^{>}$, then $b = |\pi_0(\Lambda)| = 3$, and by a direct computation we have
\[
\widetilde{R}_{\Lambda}(z) = 4 + 20z^2 + 33z^4 + 24z^6 + 8z^8 + z^{10} = (1+z^2)(2+z^2)^2(1+3z^2+z^4),
\]
which indeed satisfies Conjecture \ref{conj:log-concavity_of_ruling_polynomials}.
For a simpler computation, see also Proposition \ref{prop:multiplicativity_for_ruling_polynomials}.
\end{example}

Our original motivation comes from the Legendrian knot atlas of Chongchitmate--Ng \cite{CN24}, where the normalized ruling polynomials $\widetilde{R}_{\Lambda}(z)=z R_{\Lambda}(z)$ are listed for many Legendrian knots ($b = 1$). 
A direct check shows that all examples in the atlas satisfy Conjecture \ref{conj:log-concavity_of_ruling_polynomials}.

\begin{remark}
We are very grateful to Yu Pan for kindly sharing with us their results \cite{CSP26}, which state that any polynomial in $z^2$ with nonnegative integer coefficients can be realized as the normalized ($\mathbb{Z}$-graded) ruling polynomial of a Legendrian link. 
Thus, Conjecture~\ref{conj:log-concavity_of_ruling_polynomials} does not extend to arbitrary Legendrian links.

Nevertheless, our primary interest lies in Legendrian links associated with planar curve singularities, for which the conjecture does not appear to contradict their results. 
Moreover, extensive computational evidence supports Conjecture~\ref{conj:log-concavity_of_ruling_polynomials}; for example, it holds for all $\beta \in \Br_n^+$ with $n \le 6$ and $\ell(\beta) \le 15$.
\end{remark}

\begin{remark}\label{rem:ruling_polynomial_vs_HOMFLY-PT}
Let $P(a,z)$ denote the \textbf{HOMFLY--PT polynomial} of oriented links, defined by
\begin{equation}
a^{-1} P(\Linkplus) - a P(\Linkminus) = z P(\Linkzero), \qquad P(\text{unknot}) = 1.
\end{equation}
By \cite[Thm.~4.3]{Rut06}\footnote{In Rutherford's convention for the HOMFLY--PT polynomial, his ``$a$'' corresponds to our ``$a^{-1}$''.}, the lowest $a$-degree of $P_{\Lambda}(a,z)$ (with nonzero coefficient) is at least $\tb(\Lambda)+1$, and
\begin{equation}
z R_{\Lambda}(z) = \coeff_{a^{\tb(\Lambda)+1}} \bigl(P_{\Lambda}(a,z)\bigr).
\end{equation}
Here $\tb(\Lambda)$ denotes the Thurston--Bennequin number of $\Lambda$.
\end{remark}

\section{\texorpdfstring{$E$}{E}-polynomials of character varieties}

Let $\Sigma_{g,k}$ be a Riemann surface of genus $g$ with $k>0$ punctures such that $2-2g-k<0$.
Let $G=\GL_n(\field)$, and let $T\subset G$ be the diagonal maximal torus.
Fix $C_i\in T$ for $1\le i\le k$ such that $\prod_{i=1}^k \det C_i = 1$.

Define $\modulispace_{\Betti}$ to be the $G$-character variety of $\Sigma_{g,k}$ with local monodromy at the $i$-th puncture conjugate to $C_i$. More precisely,
\[
\modulispace_{\Betti}
:= \left\{ A_j,B_j\in G,\; M_i\in G\cdot C_i \ \middle|\ 
\prod_{j=1}^g (A_j B_j A_j^{-1} B_j^{-1}) \cdot \prod_{i=1}^k M_i = I_n
\right\} \bigg/ \!\!\bigg/ G,
\]
where $G\cdot C_i$ denotes the conjugacy class of $C_i$, and $//$ denotes the affine GIT quotient.

By work of Hausel--Letellier--Rodriguez-Villegas \cite{HLRV11,HLRV13}, under a generic condition, $\modulispace_{\Betti}$ is a smooth connected affine variety.
By results of Shende \cite{She17} and Mellit \cite{Mel25}, its compactly supported cohomology $H_c^*(\modulispace_{\Betti})$ is of Hodge--Tate type. 
Thus the mixed Hodge structure is determined by the weight filtration.

The $E$-polynomial of $\modulispace_{\Betti}$ is defined by
\[
E(\modulispace_{\Betti};q)
= \sum_{i,j} \dim \Gr^W_{2i} H_c^j(\modulispace_{\Betti})\, q^i (-1)^j.
\]
A theorem of Katz \cite[Appendix]{HRV08} identifies this polynomial with the point count of $\modulispace_{\Betti}$ over finite fields. 
Moreover, Mellit \cite{Mel25} showed that $\modulispace_{\Betti}$ satisfies the curious hard Lefschetz property, which implies that $E(\modulispace_{\Betti};q)$ is palindromic.

As a consequence, there exists a unique polynomial
\[
\widetilde R(\modulispace_{\Betti};z)\in \bZ[z^2]
\]
such that
\[
q^{-\frac{d}{2}} E(\modulispace_{\Betti};q)
= \widetilde R(\modulispace_{\Betti};z), 
\qquad z = q^{1/2} - q^{-1/2},
\]
where $d = \dim_{\field} \modulispace_{\Betti}$.
We refer to this expansion as the \textbf{genus expansion} of the character variety.

\begin{conjecture}\label{conj:log-concavity_of_E-polynomials_of_character_varieties}
Write
\[
\widetilde{R}(\modulispace_{\Betti};z)
= \sum_{h=0}^{d/2} a_h z^{2h}.
\]
Then the sequence $(a_0,\cdots,a_{d/2})$ is nonnegative, log-concave, and has no internal zeros.
In particular, it is unimodal.
\end{conjecture}

%\noindent\textbf{Note.}

\begin{remark}
For simplicity, we state the conjecture only for tame $GL_n(\field)$-character varieties. 
We expect that it admits a natural extension to smooth wild character varieties. 
Moreover, the conjecture can be formulated for any complex reductive algebraic group in place of $GL_n(\field)$.
\end{remark}

\begin{remark}\label{rem:E-polynomial_vs_modified_Macdonald_symmetric_functions}
Let $\mu^i\in\cP_n$ be the partition encoding the eigenvalue multiplicities of $C_i\in T$, and write $\type=(\mu^1,\cdots,\mu^k)$.
Then by \cite{HLRV11},
\[
E(\modulispace_{\Betti};q)
= q^{\frac{d}{2}} \HLRV_{\type}(q^{1/2},q^{-1/2}),
\]
where $\HLRV_{\type}$ is the \textbf{HLRV function} defined via modified Macdonald symmetric functions.
\end{remark}

Thanks to Remark \ref{rem:E-polynomial_vs_modified_Macdonald_symmetric_functions}, we can compute $E(\modulispace_{\Betti};q)$ in many examples.
For instance, let $(g,k,n)=(2,2,3)$ and let $C_1,C_2\in T \cong (\field^\times)^3$ be regular semisimple.
Then $d=\dim \modulispace_{\Betti}=32$, and
\begin{eqnarray*}
\widetilde{R}(\modulispace_{\Betti};z)
&=& q^{-\frac{d}{2}} E(\modulispace_{\Betti};q) \\
&=& z^{32} + 32z^{30} + 460z^{28} + 3916z^{26} + 21902z^{24} + 84340z^{22} \\
&& + 227630z^{20} + 429340z^{18} + 552720z^{16} + 461160z^{14} \\
&& + 225000z^{12} + 51120z^{10} + 2640z^8.
\end{eqnarray*}
A direct check shows that Conjecture \ref{conj:log-concavity_of_E-polynomials_of_character_varieties} holds in this example.

\section{The interconnections}

At first sight, the three conjectures above appear to be unrelated. 
However, we explain below that, when combined with known results, Conjecture~\ref{conj:log-concavity_of_ruling_polynomials} and Conjecture~\ref{conj:log-concavity_of_E-polynomials_of_character_varieties} can be viewed as generalizations of Conjecture~\ref{conj:log-concavity_for_local_BPS_invariants}.

\subsection{BPS invariants vs.\ ruling polynomials}

Let $(C,0)$ be a reduced planar curve singularity as in Conjecture~\ref{conj:log-concavity_for_local_BPS_invariants}. 
Let $R=\widehat{\cO}_{C,0}$ be the complete local ring, and let $\widetilde R$ be its normalization. Then
\[
\delta=\delta(C,0)=\dim_{\field}(\widetilde R/R).
\]
Let $\mu=\mu(C,0)$ be the Milnor number, and let $b=b(C,0)$ be the number of local branches of $(C,0)$. Then
\[
\mu=2\delta+1-b.
\]
By Maulik's proof \cite{Mau16} of the Oblomkov--Shende conjecture \cite[Conj.~1, Conj.~2']{OS12}, we have
\begin{equation}\label{eq:Hilbert_scheme_q-function_vs_lowest_a-coefficient_of_HOMFLY-PT}
q^{-\mu/2}(1-q)\sum_{\tau\ge 0}\chi(\Hilb^{[\tau]})q^{\tau}
=
\coeff_{a^{\mu}}\bigl(P_{L_{(C,0)}}(a,z)\bigr),
\end{equation}
where $P_{L_{(C,0)}}(a,z)$ is the HOMFLY--PT polynomial of the singularity link $L_{(C,0)}$.

Define
\[
\widetilde R_{L_{(C,0)}}(z)
:=
z^{b-1}\cdot \bigl(\text{lowest $a$-coefficient of } P_{L_{(C,0)}}(a,z)\bigr).
\]
Combining the definition (\ref{eqn:local_BPS_invariants}) of BPS invariants and \eqref{eq:Hilbert_scheme_q-function_vs_lowest_a-coefficient_of_HOMFLY-PT}, we obtain
\begin{equation}\label{eqn:BPS_invariants_vs_lowest_HOMFLY-PT}
\sum_{h=0}^{\delta} n_h(C,0) \,z^{2h}
=
\widetilde R_{L_{(C,0)}}(z).
\end{equation}

Next, as an algebraic link, $L_{(C,0)}$ can be represented as the closure of a positive braid. 
Write
\[
L_{(C,0)}=\beta^{\circ}
\qquad (\beta\in \Br_n^+).
\]
By \cite[Thm.~2]{Sta78}, $L_{(C,0)}$ is a fibered link whose oriented fiber surface $T_{\beta}$ is obtained as the union of $n$ disks, one for each strand, with the $i$-th and $(i+1)$-st disks joined by one half-twisted strip for each occurrence of $\sigma_i$ in $\beta$.
Since the associated fibration\footnote{\textbf{Note}. Such a fibration corresponds to a primitive class in $H^1(S^3\setminus L_{(C,0)},\mathbb{Z}) = \mathbb{Z}^b$.}
\[
S^3\setminus L_{(C,0)} \to S^1
\]
is unique up to isotopy, so is the fiber surface. By the Milnor fibration theorem \cite[Thm.~4.8, Thm.~7.2]{Mil68}, it follows that $T_{\beta}$ is the Milnor fiber of the planar curve singularity $(C,0)$, and hence
\[
\mu=\rank H_1(T_{\beta})=1-\chi(T_{\beta})=1-n+e(\beta),
\]
where $e(\beta)$ is the number of crossings of $\beta$.

Now let
\[
\Lambda:=\beta^{>}
\]
be the rainbow closure of $\beta$ \cite[\S 6.1]{STZ17}, viewed as a Legendrian link in the standard contact three-space $(\mathbb{R}^3_{x,y,z},\alpha_{\std}=dz-ydx)$.
Then
\[
\tb(\Lambda)=e(\beta)-n,
\]
and therefore
\[
\mu=1+\tb(\Lambda).
\]
By the definition of $\widetilde R_{L_{(C,0)}}(z)$ and Remark~\ref{rem:ruling_polynomial_vs_HOMFLY-PT}, we conclude that
\begin{equation}\label{eqn:BPS_invariants_vs_ruling_polynomials}
\widetilde R_{L_{(C,0)}}(z)=\widetilde R_{\Lambda}(z).
\end{equation}

In summary, we obtain the following implication.

\begin{lemma}
Conjecture~\ref{conj:log-concavity_of_ruling_polynomials} implies Conjecture~\ref{conj:log-concavity_for_local_BPS_invariants}.
\end{lemma}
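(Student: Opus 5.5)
The plan is to chain the identities established just above: pass from the BPS invariants to the HOMFLY--PT polynomial, and from there to the ruling polynomial of a suitable rainbow closure. Let $(C,0)$ be a reduced planar curve singularity. As an algebraic link, its link $L_{(C,0)}$ is the closure $\beta^{\circ}$ of some positive braid $\beta\in\Br_n^+$; I will fix such a $\beta$ and set $\Lambda:=\beta^{>}$. This $\Lambda$ is exactly the type of Legendrian link covered by Conjecture~\ref{conj:log-concavity_of_ruling_polynomials}.

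First I would check that the two normalizations match. The rainbow closure $\beta^{>}$ is smoothly isotopic to the braid closure $\beta^{\circ}$, so it has the same HOMFLY--PT polynomial. Its number of components is $b=|\pi_0(\Lambda)|$, which equals the number of cycles of $\ms_\beta$ and hence the number of local branches of $(C,0)$. Using $\mu=1+\tb(\Lambda)$, Remark~\ref{rem:ruling_polynomial_vs_HOMFLY-PT} identifies the $a^{\mu}$-coefficient of $P_{L_{(C,0)}}$ with $zR_\Lambda(z)$. By \eqref{eq:Hilbert_scheme_q-function_vs_lowest_a-coefficient_of_HOMFLY-PT}, this coefficient is nonzero, because the generating series has constant term $1$ coming from $\Hilb^{[0]}$. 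Together with Rutherford's lower bound on the $a$-degree, this shows that $a^{\mu}$ is indeed the lowest $a$-degree. It follows that
\[
\widetilde R_{L_{(C,0)}}(z)=z^{b-1}\cdot zR_\Lambda(z)=\widetilde R_\Lambda(z),
\]
which is \eqref{eqn:BPS_invariants_vs_ruling_polynomials}.

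Next, combining this with \eqref{eqn:BPS_invariants_vs_lowest_HOMFLY-PT} gives
\[
\sum_{h=0}^{\delta}n_h(C,0)z^{2h}=\widetilde R_\Lambda(z).
\]
Writing $\widetilde R_\Lambda(z)=\sum_j a_jz^{2j}$, we get $a_h=n_h(C,0)$ for all $h$, so the two sequences coincide. In particular the top degree of $\widetilde R_\Lambda$ is $\delta$: the coefficient $n_\delta(C,0)$ equals $1$, which can be seen, for instance, from $\mu=2\delta+1-b$ and the leading term. Even without this, any difference in length would only add trailing zeros. Conjecture~\ref{conj:log-concavity_of_ruling_polynomials} applied to $\Lambda$ then says that $(n_h(C,0))_{h}$ is log-concave with no internal zeros, which is Conjecture~\ref{conj:log-concavity_for_local_BPS_invariants}.

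The main point needing care is the bookkeeping in the lowest $a$-coefficient. One has to confirm that the ``lowest $a$-coefficient'' in the definition of $\widetilde R_{L_{(C,0)}}$ really sits in degree $\mu=\tb(\Lambda)+1$, and that the HOMFLY--PT conventions of Maulik/Oblomkov--Shende and of Rutherford agree after the substitution $a\leftrightarrow a^{-1}$ noted in the footnote. I would settle this using the nonvanishing argument above together with the trivial example of a smooth point (the unknot, where both sides equal $1$) to fix the normalization.
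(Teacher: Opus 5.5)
Your proposal is correct and follows the paper's own argument. Like the paper, you chain Maulik's identity (the BPS generating function equals the $a^{\mu}$-coefficient of the HOMFLY--PT polynomial), the relation $\mu=1+\tb(\beta^{>})$ (which the paper derives from Stallings' fiber surface and the Milnor fibration), and Rutherford's theorem to obtain $\sum_h n_h(C,0)z^{2h}=\widetilde R_{\beta^{>}}(z)$. Your additional check that $a^{\mu}$ really is the lowest $a$-degree, using nonvanishing from the $\Hilb^{[0]}$ term together with Rutherford's lower bound, is a useful detail the paper leaves implicit.
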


\subsection{BPS invariants vs.\ \texorpdfstring{$E$}{E}-polynomials}\label{subsec:BPS_invariants_vs_E-polynomials}

Let $(C,0)$, $\beta\in \Br_n^+$, and $\Lambda=\beta^{>}$ be as above. Recall that $T\subset \GL_n(\field)$ is the standard diagonal torus.
By \cite[Thm.~0.6, Rmk.~3.7]{Su25}, we have
\begin{equation}\label{eq:augmentation_variety_vs_wild_character_stack}
[\Aug(\beta^{>},*_1,\cdots,*_n)/T]
\cong
\modulistack_1(\mathbb{P}^1,\{\infty\},(\Delta\beta\Delta)^{\circ})
\cong
[X(\Delta\beta,w_0)/T],
\end{equation}
where:
\begin{itemize}[wide,labelwidth=!,labelindent=0pt,itemindent=!]
\item
$\Aug(\beta^{>},*_1,\cdots,*_n)$ is the augmentation variety associated to the Legendrian link $\beta^{>}$, where $*_i$ is a base point placed at the $i$-th innermost right cusp of $\beta^{>}$. See \cite{Su17} for details;

\item
$X(\Delta\beta,w_0)$ is the modified braid variety \cite[(3.1)]{Su25} associated to $(\beta,w_0)$, where $w_0$ is the longest element of $S_n$. In fact, there is a natural $T$-equivariant isomorphism
\[
\Aug(\beta^{>},*_1,\cdots,*_n)\cong X(\Delta\beta,w_0);
\]

\item
$\modulistack_1(\mathbb{P}^1,\{\infty\},(\Delta\beta\Delta)^{\circ})$ is the wild character stack on $\mathbb{P}^1$ with one irregular singularity at $\infty$, specified by the Stokes Legendrian link $(\Delta\beta\Delta)^{\circ}$.
More concretely, the Legendrian link lives in the cosphere bundle $T^{\infty}(\mathbb{P}^1\setminus\{\infty\})$, and is identified with its front diagram $(\Delta\beta\Delta)^{\circ}$ encircling $\infty$, where $\Delta$ is the half-twist.
The stack $\modulistack_1(\mathbb{P}^1,\{\infty\},(\Delta\beta\Delta)^{\circ})$ is the moduli stack of microlocal rank-one constructible sheaves on $\mathbb{P}^1$ with acyclic stalk at $\infty$, whose microsupport at contact infinity is contained in $(\Delta\beta\Delta)^{\circ}$.
\end{itemize}

Let
\[
\modulistack_{\Betti}(\beta):=
\modulistack_1(\mathbb{P}^1,\{\infty\},(\Delta\beta\Delta)^{\circ}),
\]
and let
\[
d_{\beta}:=\dim \Aug(\beta^{>},*_1,\cdots,*_n)-n
=\dim X(\Delta\beta,w_0)-n.
\]
Recall that $X(\Delta\beta,w_0)$ admits a cell decomposition (see \cite[Prop.~3.4 or Page~33]{Su25} and Remark \ref{rem:walks}):
\begin{equation}\label{eqn:cell_decomposition_of_modified_braid_varieties}
X(\Delta\beta,w_0) = \bigsqcup_{p\in \cW(\beta)} X_p(\Delta\beta,w_0),\qquad 
X_p(\Delta\beta,w_0) \cong (\field^{\times})^{|S_p|}\times \field^{|U_p|},
\end{equation}
where $|S_p| + 2|U_p| = \ell(\beta)$. In particular, it follows that
\[
d_{\beta}=\tb(\beta^{>}) = \ell(\beta) - n.
\]

Define
\[
R(\modulistack_{\Betti}(\beta);z)
:=
q^{-d_{\beta}/2}\,|\modulistack_{\Betti}(\beta;\mathbb{F}_q)|
=
q^{-d_{\beta}/2}\frac{|\Aug(\beta^{>},*_1,\cdots,*_n;\mathbb{F}_q)|}{(q-1)^n}
=
q^{-d_{\beta}/2}\frac{|X(\Delta\beta,w_0;\mathbb{F}_q)|}{(q-1)^n}.
\]
Then
\[
\widetilde R(\modulistack_{\Betti}(\beta);z):=z^bR(\modulistack_{\Betti}(\beta);z)
\]
is a wild analogue of $\widetilde R(\modulispace_{\Betti};z)$ from Conjecture~\ref{conj:log-concavity_of_E-polynomials_of_character_varieties}. Moreover, we have
\begin{equation}\label{eq:E-polynomial_vs_ruling_polynomial}
\widetilde R(\modulistack_{\Betti}(\beta);z)=\widetilde R_{\beta^{>}}(z).
\end{equation}
Indeed, using the cell decomposition \eqref{eqn:cell_decomposition_of_modified_braid_varieties} and \eqref{eqn:ruling_polynomial_of_rainbow_closure_of_positive_braid}, a direct computation shows that
\[
R(\modulistack_{\Betti}(\beta);z) = \sum_{p\in\cW(\beta)} z^{|S_p|-n} = R_{\beta^>}(z).
\]
Thus, \eqref{eq:E-polynomial_vs_ruling_polynomial} follows.

Combined with \eqref{eqn:BPS_invariants_vs_ruling_polynomials}, \eqref{eq:E-polynomial_vs_ruling_polynomial} gives the following implication.

\begin{lemma}
Conjecture~\ref{conj:log-concavity_of_E-polynomials_of_character_varieties} in the wild case implies Conjecture~\ref{conj:log-concavity_for_local_BPS_invariants}.
\end{lemma}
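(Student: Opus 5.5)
The plan is to chain the identifications already established in this section. First, since $(C,0)$ is an arbitrary reduced planar curve singularity, its link $L_{(C,0)}$ is algebraic and hence the closure $\beta^{\circ}$ of some positive braid $\beta\in\Br_n^+$, as recalled above. Take $\Lambda=\beta^{>}$, and let $\modulistack_{\Betti}(\beta)=\modulistack_1(\mathbb{P}^1,\{\infty\},(\Delta\beta\Delta)^{\circ})$ be the associated wild character stack. The wild case of Conjecture~\ref{conj:log-concavity_of_E-polynomials_of_character_varieties} is understood to be the assertion for the normalized genus expansion $\widetilde R(\modulistack_{\Betti}(\beta);z)=z^bR(\modulistack_{\Betti}(\beta);z)$, which is the wild analogue of $\widetilde R(\modulispace_{\Betti};z)$. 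That assertion says: writing $\widetilde R(\modulistack_{\Betti}(\beta);z)=\sum_j a_j z^{2j}$, the sequence $(a_j)$ is nonnegative, log-concave, and has no internal zeros.

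Second, I will invoke \eqref{eq:E-polynomial_vs_ruling_polynomial}, which gives $\widetilde R(\modulistack_{\Betti}(\beta);z)=\widetilde R_{\beta^{>}}(z)$. This identity comes from the cell decomposition \eqref{eqn:cell_decomposition_of_modified_braid_varieties} together with the walk formula \eqref{eqn:ruling_polynomial_of_rainbow_closure_of_positive_braid}. I will then combine it with \eqref{eqn:BPS_invariants_vs_ruling_polynomials} and \eqref{eqn:BPS_invariants_vs_lowest_HOMFLY-PT}, which give $\widetilde R_{\beta^{>}}(z)=\widetilde R_{L_{(C,0)}}(z)=\sum_{h=0}^{\delta}n_h(C,0)z^{2h}$. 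Comparing coefficients of $z^{2h}$ yields $a_h=n_h(C,0)$ for all $h$. Therefore log-concavity with no internal zeros of $(a_h)$ is exactly Conjecture~\ref{conj:log-concavity_for_local_BPS_invariants} for $(C,0)$.

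The only point requiring care is bookkeeping of the index range and normalization. I must check that the degree range of $\widetilde R(\modulistack_{\Betti}(\beta);z)$ matches $0\le h\le\delta$. This follows from $d_\beta=\ell(\beta)-n=\mu-1=2\delta-b$. Hence the top power of $z$ in $z^bR$ is $z^{d_\beta+b}=z^{2\delta}$, attained by the unique walk with $U_p=\emptyset$ or, more precisely, by the maximal number of switches. The lowest power is $z^0$, since $n_0(C,0)$ is the constant term. Any padding by zeros at the ends would not affect log-concavity or the absence of internal zeros in any case.

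I expect the main (mild) obstacle to be making precise what ``the wild case'' means: namely, that the conjecture is applied to the stack $\modulistack_{\Betti}(\beta)$ via its point count rather than to a smooth variety with an $E$-polynomial. I would simply adopt the point-count normalization used in the definition of $R(\modulistack_{\Betti}(\beta);z)$. With that convention, the implication is a formal consequence of the chain of equalities above.
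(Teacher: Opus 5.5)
Your proposal is correct and follows the paper's argument: the paper obtains the lemma by chaining \eqref{eq:E-polynomial_vs_ruling_polynomial} with \eqref{eqn:BPS_invariants_vs_ruling_polynomials} and \eqref{eqn:BPS_invariants_vs_lowest_HOMFLY-PT}. Like you, it reads the wild case as the conjecture for $\widetilde R(\modulistack_{\Betti}(\beta);z)=z^bR(\modulistack_{\Betti}(\beta);z)$. Your degree bookkeeping (top term $z^{2\delta}$ with coefficient $1$, from the constant walk at $w_0$) is consistent but not needed, since these identities are already equalities of polynomials.
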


%\medskip

\subsection{A heuristic relation with the BPS picture}

It is also worth mentioning a more intuitive relationship between character varieties and the BPS picture.

Start with a generic (hence smooth) character variety $\modulispace_{\Betti}$ of dimension $d$ on $\Sigma_{g,k}$ whose local monodromy at the $j$-th puncture is conjugate to
\[
C_j=\diag(e^{2\pi i a_{j,1}}I_{\mu^j_1},\cdots,e^{2\pi i a_{j,r_j}}I_{\mu^j_{r_j}})\in T,
\]
where
\[
0\le a_{j,1}<a_{j,2}<\cdots<a_{j,r_j}<1,
\qquad
\sum_i \mu_i^j=n.
\]
By nonabelian Hodge theory for punctured curves \cite{Sim90,Sim92,Sim94,Kon93}, there is a diffeomorphism
\[
\NAH:\modulispace_{\Dol}\simeq \modulispace_{\Betti},
\]
where $\modulispace_{\Dol}$ is the moduli space of stable parabolic regular $G$-Higgs bundles
\[
\cE_{\pa}=(\cE,\theta,\{\cF_j\}_{j=1}^k)
\]
on $(\Sigma_g,D=p_1+\cdots+p_k)$ of parabolic degree zero. Here:
\begin{itemize}[wide,labelwidth=!,labelindent=0pt,itemindent=!]
\item
$\cE$ is a rank-$n$ holomorphic vector bundle on $\Sigma_g$;

\item
$\theta:\cE\to \cE\otimes K_{\Sigma_g}(D)$ is an $\cO_{\Sigma_g}$-linear Higgs field;

\item
for each $j$,
\[
\cE_{p_j}=\cF_{j,1}\supset \cdots \supset \cF_{j,r_j+1}=0,
\qquad
\rank(\cF_{j,i}/\cF_{j,i+1})=\mu_i^j;
\]

\item
the residue satisfies
\[
\Res_{p_j}(\theta)\,\cF_{j,i}\subset \cF_{j,i+1};
\]

\item
the parabolic degree is
\[
\pardeg(\cE_{\pa})
=
\deg(\cE)+\sum_{j=1}^k\sum_{i=1}^{r_j} a_{j,i}\mu_i^j.
\]
\end{itemize}

Thus \(\NAH\) induces an isomorphism
\[
\NAH^*:H^*(\modulispace_{\Betti})\cong H^*(\modulispace_{\Dol}).
\]
By the \(P=W\) conjecture \cite{dCHM12}, now proved in \cite{HMMS22,MS24,MSY25} and expected to extend to the present setting, one has
\[
P_{\bullet}H^*(\modulispace_{\Dol})\cong W_{2\bullet}H^*(\modulispace_{\Betti}),
\]
where \(P_{\bullet}\) is the perverse filtration with respect to the Hitchin fibration
\[
h:\modulispace_{\Dol}\to \mathbb{A},
\]
and \(W_{\bullet}\) is the weight filtration. For a partial geometric interpretation, see \cite{Sim16, KNPS15, Su23}.
Denote
\[
P(\modulispace_{\Dol};q,t)
:=
\sum_{i,j}\dim \Gr_i^P H^j(\modulispace_{\Dol})\,q^i t^j.
\]
Then
\[
E(\modulispace_{\Betti};q)=P(\modulispace_{\Dol};q,-1),
\]
which may be viewed as a perverse \(E\)-polynomial.

By a suitable generalization of the spectral correspondence \cite{BNR89}, the Hitchin fibration can be interpreted as a ``relative compactified Jacobian''
\[
\overline{J}\cC_{\Dol}\to \mathbb{A},
\]
where
\[
\pi_{\Dol}:\cC_{\Dol}\to \mathbb{A}
\]
is the family of ``spectral curves''.
In particular, if the spectral curve \(C_a:=\cC_{\Dol,a}\) is integral, then the Hitchin fiber
\[
h^{-1}(a)=\overline{J}C_a
\]
is the compactified Jacobian of \(C_a\).

The nilpotent residue condition yields a \(\mathbb{G}_m\)-action on \(\modulispace_{\Dol}\) scaling the Higgs field. It follows that
\[
P_{\bullet}H^*(\modulispace_{\Dol})
\cong
P_{\bullet}H^*(h^{-1}(0)).
\]
Ignoring for the moment the nonreduced issues of the central spectral curve, one is led to consider an abstract integral curve \(C=C_0\). By the Macdonald formula for \textbf{integral} locally planar curves \cite{MS13,MY14},
\[
H^k(\Hilb^{[\tau]}(C))
\cong
\bigoplus_{i+j\le \tau,\ i,j\ge 0}
\Gr_i^P\bigl(H^{k-2j}(\overline{J}C)\bigr)(-j),
\]
where \(\Hilb^{[\tau]}(C)\) is the Hilbert scheme of \(\tau\) points on \(C\).
Hence
\[
\sum_{k\ge 0,\tau\ge 0}\dim H^k(\Hilb^{[\tau]}(C))\,q^{\tau}t^k
=
\frac{P(\modulispace_{\Dol};q,t)}{(1-q)(1-t^2q)}.
\]
In particular, at \(t=-1\), using the definition (\ref{eqn:global_BPS_invariants}) of BPS invariants,
\begin{equation}\label{eq:perverse_E-polynomial_vs_Hilbert_scheme_q-function}
P(\overline{J}C;q,-1)
=
(1-q)^2\sum_{\tau\ge 0}\chi(\Hilb^{[\tau]}(C))q^{\tau} = q^{g(C)}\sum_{h=0}^{\delta(C)}n_h(C)z^{2(\tilde{g}(C)+h)}.
\end{equation}
Since
\[
d=\dim \modulispace_{\Dol}=2\dim h^{-1}(0)=2g(C_0),
\]
this suggests the heuristic identity
\[
\widetilde R(\modulispace_{\Betti};z)
=
q^{-d/2}P(\modulispace_{\Dol};q,-1)
=
\sum_{h=0}^{\delta(C_0)} n_h(C_0)\,z^{2(\tilde g(C_0)+h)}.
\]
Here \(n_h(C_0)\) is interpreted as the \(h\)-th BPS invariant of \(C_0\), while the central ``spectral curve'' \(C_0\) may be highly non-reduced. Nevertheless, this provides a conceptual bridge between Conjecture~\ref{conj:log-concavity_of_E-polynomials_of_character_varieties} and Conjecture~\ref{conj:log-concavity_for_global_BPS_invariants}.

\subsection{A wild case}

In fact, the above discussion becomes more precise in a wild case.
Let \(C\) be a rational integral projective curve with a unique irreducible planar curve singularity \((C,0)\). Then
\[
b=1,\qquad \tilde g(C)=0,\qquad \dim \overline{J}C=g(C)=\delta.
\]
Let
\[
L_{(C,0)}=\beta^{\circ},\qquad \beta\in \Br_n^+,
\]
be the singularity knot.
Then \(PT:=T/\mathbb{G}_m\) acts freely on \(\Aug(\beta^{>},*_1,\cdots,*_n)\). Define the wild character variety \(\modulispace_{\Betti}(\beta)\) to be the good moduli space, in the sense of \cite{Alp13}, associated to \(\modulistack_1(\mathbb{P}^1,\{\infty\},(\Delta\beta\Delta)^{\circ})\). By \eqref{eq:augmentation_variety_vs_wild_character_stack},
\[
\modulispace_{\Betti}(\beta)
=
\Spec~\field[\Aug(\beta^{>},*_1,\cdots,*_n)]^T
=
\Aug(\beta^{>},*_1,\cdots,*_n)/PT.
\]
Then
\[
d:=\dim \modulispace_{\Betti}(\beta)
=
\dim \Aug(\beta^{>},*_1,\cdots,*_n) - \dim PT 
=
\tb(\beta^{>})+1
=
\mu
=
2\delta.
\]
By \eqref{eq:E-polynomial_vs_ruling_polynomial} and Remark~\ref{rem:ruling_polynomial_vs_HOMFLY-PT},
\begin{eqnarray*}
\widetilde R(\modulispace_{\Betti}(\beta);z)
&:=& q^{-d/2}E(\modulispace_{\Betti}(\beta);q) = q^{-d/2}|\modulispace_{\Betti}(\beta;\mathbb{F}_q)|\\
&=& zR(\modulistack_{\Betti}(\beta);z) = \widetilde R_{\beta^{>}}(z) = \coeff_{a^{\mu}}(P_{L_{(C,0)}}(a,z)).
\end{eqnarray*}
Now a wild \(P=W\) conjecture predicts that
\[
P_{\bullet}H^*(\overline{J}C)\cong W_{2\bullet}H^*(\modulispace_{\Betti}(\beta)).
\]
At the level of the \(t=-1\) specialization, this indeed holds by \eqref{eq:perverse_E-polynomial_vs_Hilbert_scheme_q-function} and \eqref{eq:Hilbert_scheme_q-function_vs_lowest_a-coefficient_of_HOMFLY-PT}:
\begin{eqnarray*}
q^{-\mu/2}P(\overline{J}C;q,-1)
&=&
q^{-\mu/2}(1-q)^2\sum_{\tau\ge 0}\chi(\Hilb^{[\tau]}(C))q^{\tau} \\
&=&
q^{-\mu/2}(1-q)\sum_{\tau\ge 0}\chi(\Hilb^{[\tau]}(C,0))q^{\tau} \\
&=&
\coeff_{a^{\mu}}(P_{L_{(C,0)}}(a,z)) \\
&=&
q^{-d/2}E(\modulispace_{\Betti}(\beta);q).
\end{eqnarray*}
Here, we use $\Hilb^{[\tau]}(C) = \sqcup_{0\leq \tau'\leq \tau} \Hilb^{[\tau']}(C,0)\times \mathbb{A}^{\tau-\tau'}$.
Finally, by definition (\ref{eqn:local_BPS_invariants}) of BPS invariants,
\[
\widetilde R(\modulispace_{\Betti}(\beta);z)
=
q^{-\mu/2}P(\overline{J}C;q,-1)
=
\sum_{h=0}^{\delta} n_h(C,0)\,z^{2h}.
\]
Therefore, Conjecture \ref{conj:log-concavity_for_local_BPS_invariants} can be viewed as a special case of Conjecture \ref{conj:log-concavity_of_E-polynomials_of_character_varieties}, realized by the wild character variety $\modulispace_{\Betti}(\beta)$ over $\mathbb{P}^1$ with one irregular singularity at $\infty$.

\section{Main evidence}

In addition to the empirical evidence discussed earlier, this section provides theoretical evidence for the conjectures stated above.

\subsection{Torus knots}

Let $(C,0)$ be the plane curve singularity defined by
\[
y^n-x^m=0,
\]
where $n,m$ are coprime positive integers. Then $\delta=\frac{(n-1)(m-1)}{2}$,
and the singularity link $L_{(C,0)}$ is the $(n,m)$-torus knot $T_{n,m} = ((\sigma_1\cdots\sigma_{n-1})^m)^{\circ}$.

\begin{theorem}\label{thm:log-concavity_for_torus_knots}
Conjecture~\ref{conj:log-concavity_for_local_BPS_invariants} holds for $(C,0)=\{y^n-x^m=0\}$ with $(n,m)=1$.
\end{theorem}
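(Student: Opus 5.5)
The plan is to compute the generating function in \eqref{eqn:local_BPS_invariants} in closed form, and then to show that, as a polynomial in $w:=z^2$, it has only real negative roots. Newton's inequalities then give log-concavity with no internal zeros.

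\textbf{Step 1 (closed form).} For $(C,0)=\{y^n=x^m\}$ with $(n,m)=1$ we have $b=1$. By \eqref{eq:Hilbert_scheme_q-function_vs_lowest_a-coefficient_of_HOMFLY-PT} and \eqref{eqn:BPS_invariants_vs_lowest_HOMFLY-PT},
\[
\sum_h n_h(C,0)w^h=\coeff_{a^{\mu}}P_{T_{n,m}}(a,z).
\]
For torus knots, this lowest $a$-coefficient is known to be the rational $q$-Catalan number (Jones' formula for the HOMFLY--PT polynomial of torus knots, or directly the Oblomkov--Shende/Gorsky computation of $\chi(\Hilb^{[\tau]})$ via semigroup modules). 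Explicitly,
\[
\sum_h n_h w^h = q^{-\delta}\,\frac{1}{[n+m]_q}\binom{n+m}{n}_q = q^{-\delta}\,\frac{(1-q)\,(1-q^{n+1})\cdots(1-q^{n+m-1})}{(1-q^{2})\cdots(1-q^{m})}\cdot\frac{1}{1-q}\cdots
\]
The point is that the right-hand side is $q^{-\delta}c_{n,m}(q)$, where $c_{n,m}(q)\in\bZ_{\ge0}[q]$ is a palindromic polynomial of degree $2\delta=(n-1)(m-1)$ with $c_{n,m}(1)\ne 0$. I would double-check this normalization against the case $(2,3)$, where the answer should be $1+q^2$ up to a shift, i.e. $w+2$. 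If the explicit formula in the cited literature is awkward to invoke, an alternative is to compute $\sum_\tau\chi(\Hilb^{[\tau]})q^\tau$ directly by torus localization, as a sum over $\Gamma$-semimodules of the semigroup $\Gamma=\langle n,m\rangle$, and to match the result with the $q$-Catalan number.

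\textbf{Step 2 (roots).} Since $c_{n,m}(q)$ is a ratio of products of factors $1-q^k$ and is a polynomial, all of its roots are roots of unity $\zeta\neq1$ (note $c_{n,m}(1)=\binom{n+m}{n}/(n+m)\neq0$). Its roots are closed under complex conjugation, since the coefficients are real. Each conjugate pair $\zeta=e^{i\theta},\bar\zeta$ with $0<\theta<\pi$ contributes
\[
q^{-1}(q-\zeta)(q-\bar\zeta)=q+q^{-1}-2\cos\theta=w+(2-2\cos\theta),
\]
because $w=z^2=q+q^{-1}-2$. The constant $2-2\cos\theta$ is strictly positive. The root $-1$ must occur with even multiplicity $2e$, since the degree $2\delta$ is even and all other roots pair up. These roots contribute $\bigl(q^{-1}(q+1)^2\bigr)^e=(w+4)^e$. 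Because the leading coefficient of $c_{n,m}$ is $1$, we obtain
\[
\sum_{h=0}^{\delta}n_h(C,0)\,w^h=\prod_{i=1}^{\delta}(w+c_i),\qquad c_i\in(0,4].
\]

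\textbf{Step 3 (conclusion).} A polynomial with only real negative roots has positive coefficients. By Newton's inequalities these coefficients satisfy $a_j^2\ge a_{j-1}a_{j+1}\cdot\frac{(j+1)(\delta-j+1)}{j(\delta-j)}\ge a_{j-1}a_{j+1}$. Since all $a_j>0$, there are no internal zeros. Alternatively, one can apply the multiplicativity property from the introduction to the linear factors $w+c_i$. This proves Theorem \ref{thm:log-concavity_for_torus_knots}.

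The main obstacle is Step 1: pinning down the precise citation and normalization identifying the genus-expansion generating function with the rational $q$-Catalan number, including the powers of $q$ and the factor $(1-q)$. Once that identification is in hand, Steps 2 and 3 are formal.
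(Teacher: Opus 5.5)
Your proposal is correct and takes essentially the same route as the paper: identify the BPS generating function with the lowest $a$-coefficient of the HOMFLY--PT polynomial of $T_{n,m}$, rewrite that coefficient as the rational $q$-Catalan number $q^{-\delta}\binom{n+m}{n}_q/[n+m]_q$, factor it over roots of unity into factors $w+2-\xi-\xi^{-1}$ with positive constant terms, and conclude by multiplicativity (your Newton-inequality finish is an equivalent alternative). The only blemish is the garbled display in your Step 1, which should read $q^{-\delta}\frac{(1-q^{n+1})\cdots(1-q^{n+m-1})}{(1-q^{2})\cdots(1-q^{m})}$; conversely, your explicit handling of a possible root $-1$ and of monicity makes precise a point the paper leaves implicit.
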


\begin{proof}
By the discussion in the previous section, it suffices to prove log-concavity for
$\widetilde{R}_{T_{n,m}}(z)\in \mathbb{Z}[z^2]$.

\noindent{}\textbf{Step 1.} By Jones \cite{Jon87}:
\begin{equation}\label{eq:Jones_formula_for_torus_knot}
P_{T_{n,m}}(a,z) = \frac{(1-q)(a/\sqrt{q})^{(n-1)(m-1)} \sum_{j=0}^{n-1} (-1)^j 
\frac{q^{jm+\frac{(n-j-1)(n-j)}{2}}}{[j]_q!\,[n-1-j]_q!} 
\prod_{i=-(n-1-j)}^{j} (q^i - a^2)}{(1-q^n)(1-a^2)},
\end{equation}
where
\[
[r]_q! := (1-q^r)[r-1]_q!,\qquad [0]_q! = 1;\qquad\qquad z=q^{1/2}-q^{-1/2}.
\]

\noindent{}\textbf{Step 2.} Using the $q$-binomial theorem
\[
\prod_{j=0}^{N-1}(1+q^j t) 
= 
\sum_{j=0}^{N} q^{\frac{j(j-1)}{2}} \genfrac{[}{]}{0pt}{}{N}{j}_{q} t^j,
\]
with appropriate substitutions, this yields
\begin{equation}\label{eq:torus_knot_q-binomial}
\widetilde{R}_{T_{n,m}}(z) 
= 
q^{-\frac{(n-1)(m-1)}{2}} \frac{\genfrac{[}{]}{0pt}{}{m+n}{n}_{q}}{\genfrac{[}{]}{0pt}{}{m+n}{1}_{q}},
\qquad \genfrac{[}{]}{0pt}{}{a}{b}_{q} := \frac{[a]_q!}{[b]_q!\,[a-b]_q!}.
\end{equation}

\noindent{}\textbf{Step 3.} From \eqref{eq:torus_knot_q-binomial}, we observe that
\begin{equation}\label{eq:torus_knot_factorization}
\widetilde{R}_{T_{n,m}}(z) 
= 
\prod_{j=1}^{\frac{(n-1)(m-1)}{2}} 
(q^{1/2} - \xi_j q^{-1/2})(q^{1/2} - \xi_j^{-1} q^{-1/2})
= 
\prod_{j=1}^{\frac{(n-1)(m-1)}{2}} \bigl(z^2 + 2 - \xi_j - \xi_j^{-1}\bigr),
\end{equation}
where each $\xi_j$ is a root of unity. Each factor $z^2 + 2 - \xi_j - \xi_j^{-1}$ is a polynomial in $z^2$ with nonnegative coefficients and is log-concave. By the multiplicativity of log-concavity, the product $\widetilde{R}_{T_{n,m}}(z)$ is log-concave with no internal zeros.
\end{proof}

\subsection{ADE singularities}

Recall the ADE planar curve singularities:
\begin{itemize}[wide,labelwidth=!,labelindent=0pt,itemindent=!]
\item \(A_n\;(n\ge 1)\): \(y^2+x^{n+1}=0\);
\item \(D_n\;(n\ge 4)\): \(xy^2+x^{n-1}=0\);
\item \(E_6\): \(y^3+x^4=0\);\quad \(E_7\): \(y^3+yx^3=0\);\quad \(E_8\): \(y^3+x^5=0\).
\end{itemize}

%The BPS invariants for these singularities were computed by Shende \cite[\S 5]{She12}.

\begin{proposition}[{\cite[\S 5]{She12}}]\label{prop:BPS_invariants_for_ADE}
The BPS invariants of ADE singularities are as follows: 
\begin{align}
A_{2\delta-1}&:\; n_h = \binom{\delta+h}{\delta-h}, \label{eq:BPS_A_odd}\\[4pt]
A_{2\delta}&:\; n_h = \binom{\delta+h+1}{\delta-h}, \label{eq:BPS_A_even}\\[4pt]
D_{2\delta-1}&:\; n_h = \binom{\delta+h-2}{\delta-h} + 2\binom{\delta+h-2}{\delta-h-1} + \binom{\delta+h-1}{\delta-h-2}, \label{eq:BPS_D_odd}\\[4pt]
D_{2\delta-2}&:\; n_h = \binom{\delta+h-3}{\delta-h} + 2\binom{\delta+h-3}{\delta-h-1} + \binom{\delta+h-2}{\delta-h-2}, \label{eq:BPS_D_even}\\[4pt]
E_6&:\; (n_0,\ldots,n_3) = (5,10,6,1), \label{eq:BPS_E6}\\
E_7&:\; (n_0,\ldots,n_4) = (2,11,15,7,1), \label{eq:BPS_E7}\\
E_8&:\; (n_0,\ldots,n_4) = (7,21,21,8,1). \label{eq:BPS_E8}
\end{align}
Equivalently, for each $\Gamma \in \{A_n,\ n\ge 1;\ D_n,\ n\ge 4;\ E_6;\ E_7;\ E_8\}$ with $\delta$-invariant $\delta(\Gamma)$, define
\[
m_k(\Gamma):=\#\{\text{independent sets of size }k\text{ in the Dynkin diagram }\Gamma\}.
\]
Then
\[
n_h(\Gamma)=m_{\delta(\Gamma)-h}(\Gamma).
\]
\end{proposition}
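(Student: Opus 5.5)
The plan is to compute the genus expansion directly from the defining formula \eqref{eqn:local_BPS_invariants}, or equivalently from the ruling polynomial via \eqref{eqn:BPS_invariants_vs_ruling_polynomials}, and then match the answer with independent-set counts.

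\textbf{Step 1: the $A$-series.} For $A_{2\delta-1}$ ($y^2=x^{2\delta}$, $b=2$) and $A_{2\delta}$ ($y^2=x^{2\delta+1}$, $b=1$), the link is the torus link $T_{2,2\delta}$, resp.\ $T_{2,2\delta+1}$, the closure of $\sigma_1^{k}$ with $k=2\delta$, resp.\ $2\delta+1$. Using \eqref{eqn:ruling_polynomial_of_rainbow_closure_of_positive_braid} with $n=2$, a $w_0$-$w_0$-walk is a word in $\{e,\ms_1\}$. Starting at $w_0=\ms_1$, at each crossing we either switch (stay, contributing $z$) or move down to $e$; from $e$ we are forced back up to $\ms_1$. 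So walks are sequences in which every forced step is preceded by a non-switch descent. This gives the recursion
\[
R_k=zR_{k-1}+R_{k-2},
\]
with small initial values. Hence $\widetilde R$ is a Fibonacci-type polynomial whose coefficients are $\binom{\delta+h}{\delta-h}$ and $\binom{\delta+h+1}{\delta-h}$, by the standard identity that $\sum_j \binom{k-j}{j}x^j$ satisfies the same recursion. For $D_n$ ($x(y^2+x^{n-2})=0$), the link is $A_{n-3}$-type plus a transverse unknot component. I would either use the ruling recursion for a braid $\sigma_1^{n-2}\sigma_2\sigma_1^2\sigma_2$ (the standard positive braid representative), or use Hilbert schemes directly: $R=\field[[x,y]]/(x(y^2+x^{n-2}))$ and $\chi(\Hilb^{[\tau]})$ is a count of $T$-fixed monomial-type ideals. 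The $E$ cases are finite computations: $E_6=T_{3,4}$ and $E_8=T_{3,5}$ follow from \eqref{eq:torus_knot_q-binomial}, and $E_7$ comes from a direct ruling count for its braid $(\sigma_1\sigma_2)^3\sigma_1^2$ or similar.

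\textbf{Step 2: matching with independent sets.} Let $I_\Gamma(x)=\sum_k m_k(\Gamma)x^k$ be the independence polynomial. For paths, deleting an end vertex gives
\[
I_{A_n}=I_{A_{n-1}}+xI_{A_{n-2}},
\]
the same recursion as in Step 1 after reversing coefficients via $n_h=m_{\delta-h}$, i.e.\ working with $x^{\delta}I(1/x)$ in the variable $z^2$. Indeed $m_k(A_n)=\binom{n+1-k}{k}$, and with $n=2\delta-1$, $k=\delta-h$ this is $\binom{\delta+h}{\delta-h}$; the case $n=2\delta$ is analogous. For $D_n$, delete the branch vertex's two leaves, or condition on the branch vertex, to express $I_{D_n}$ through path polynomials. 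This yields three terms matching \eqref{eq:BPS_D_odd}--\eqref{eq:BPS_D_even}: both leaves in or out of the set against the adjacent path, giving the binomial shifts with multiplicity $2$ for exactly one leaf. For $E_6,E_7,E_8$, one counts directly. For example, $E_6$ has $6$ vertices, $m_0=1$, $m_1=6$, $m_2=\binom62-5=10$, $m_3=5$, and reversing gives $(5,10,6,1)$. $E_7$ and $E_8$ are similar.

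The main obstacle will be the $D$-series BPS computation: getting the correct link or braid representative and handling $b=2$ or $3$ normalizations (for $D_{\mathrm{even}}$, $b=3$), so that the powers of $z$ line up with the genus expansion. I would cross-check small cases ($D_4$: $\delta=3$, expect $(n_h)=(4,6?,\dots)$ from $m_k(D_4)=1,4,3,1$, giving $(1,3,4,1)$ reversed) against \eqref{eq:BPS_D_even} before proving the recursion in general.
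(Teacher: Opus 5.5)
You follow the paper's route: identify $\sum_h n_h z^{2h}$ with $\widetilde R_{\beta^{>}}$ via \eqref{eqn:BPS_invariants_vs_ruling_polynomials}, then compare recursions with independence polynomials. The parts you actually carry out are correct. The two-state walk gives $W_k=zW_{k-1}+W_{k-2}$ for $W_k:=z^2R_{(\sigma_1^k)^{>}}$, hence the coefficients $\binom{k-j}{j}$ and the $A$-formulas. Applying \eqref{eq:torus_knot_q-binomial} to $T_{3,4}$ and $T_{3,5}$ does return $(5,10,6,1)$ and $(7,21,21,8,1)$.

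The genuine gap is the $D_n$ series, the one case that needs a real argument. You leave it as ``the main obstacle'', and naming the braid $\sigma_1^{n-2}\sigma_2\sigma_1^2\sigma_2$ does not by itself give a recursion. The missing idea is that in $(\sigma_1^{n-2}\sigma_2\sigma_1^2\sigma_2)^{>}$ the innermost pair of cusps bounds a common eye in every normal ruling. Consequently the two $\sigma_2$-crossings $b_1,b_4$ are both switches or both non-switches. In the latter case the middle crossings $b_2,b_3$ are again both switches or both not. Deleting the innermost eye in these three cases yields rulings of $(\sigma_1^{n})^{>}=A_{n-1}$, $(\sigma_1^{n-2})^{>}=A_{n-3}$ and $(\sigma_1^{n-2})^{>}=A_{n-3}$, so $R_{D_n}=zR_{A_{n-1}}+zR_{A_{n-3}}+z^{-1}R_{A_{n-3}}$. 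Using $b(D_n)=b(A_{n-1})+1=b(A_{n-3})+1$, this becomes
\[
\widetilde R_{D_n}=z^2\widetilde R_{A_{n-1}}+(z^2+1)\widetilde R_{A_{n-3}}.
\]
This matches conditioning on a single leaf of $D_n$. Your two-leaf split is equivalent via $m_k(A_{n-1})=m_k(A_{n-2})+m_{k-1}(A_{n-3})$.

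In your own walk language this is a short check in $S_3$. Along $\sigma_1^{n-2}$ the walk alternates between $w_0$ and $\ms_2\ms_1$. Requiring a return to $w_0$ after $b_4$ leaves exactly three patterns:
\begin{itemize}
\item $b_1,b_4$ are both stays, and $b_2b_3$ continue a walk of $\sigma_1^{n}$;
\item $b_1$ steps down to $\ms_1\ms_2$, $b_4$ is forced back up, and $b_2,b_3$ are two stays;
\item $b_1$ steps down to $\ms_1\ms_2$, $b_4$ is forced back up, and $b_2,b_3$ are a down--up pair.
\end{itemize}

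There are two secondary errors.
\begin{itemize}
\item Your Hilbert-scheme fallback does not work as stated. $x(y^2+x^{n-2})$ is quasi-homogeneous only for a one-dimensional torus, and its weight spaces in $R$ are not one-dimensional: $y^2$ and $x^{n-2}$ have equal weight. So the fixed loci are positive-dimensional, and $\chi(\Hilb^{[\tau]})$ is not a count of monomial ideals.
\item The braid $(\sigma_1\sigma_2)^3\sigma_1^2=\sigma_1^4\sigma_2\sigma_1^2\sigma_2$ is the $D_6$ braid, with three components and $\mu=6$. It is not $E_7$, which has $b=2$ and $\mu=7$. The correct $E_7$ braid is $\sigma_1^4\sigma_2\sigma_1^3\sigma_2$, conjugate to $(\sigma_1\sigma_2)^3\sigma_1^2\sigma_2$.
\end{itemize}
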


\begin{remark}\label{rem:BPS_invariants_vs_independence_polynomial}
For each ADE singularity $\Gamma$, Proposition~\ref{prop:BPS_invariants_for_ADE} shows that the sequence $(n_{\delta(\Gamma)-h})$ coincides with the sequence $(m_k(\Gamma))$ of coefficients of the \textbf{independence polynomial}
\[
I(\Gamma;x):= \sum_{k\ge 0} m_k(\Gamma)\, x^k
\]
of the Dynkin diagram $\Gamma$. 
This identification is a special feature of ADE singularities and does not seem to extend to the general setting of our conjectures.

In particular, independence polynomials of trees are not log-concave in general for sufficiently large order (see \cite{KL25}). This does not contradict our results, since the sequences considered here arise from geometric invariants rather than arbitrary independence polynomials.
\end{remark}

\begin{proof}
The statement was originally proved in \cite[\S 5]{She12} by computing the Euler characteristics of the first $\delta$ punctual Hilbert schemes.
We give an alternative proof using ruling polynomials.

By \cite[Ex.~2.5]{Cas22}, the Legendrian links associated to ADE singularities are:
\begin{itemize}
\item $A_n$: $(\sigma_1^{n+1})^{>}$;
\item $D_n$: $(\sigma_1^{n-2}\sigma_2\sigma_1^2\sigma_2)^{>}$;
\item $E_n$ ($n=6,7,8$): $(\sigma_1^{n-3}\sigma_2\sigma_1^3\sigma_2)^{>}$.
\end{itemize}

For $\Gamma \in \{A_n, D_n, E_6, E_7, E_8\}$, define
\[
N_\Gamma(z) := \sum_k m_k(\Gamma)\, z^{2(\delta(\Gamma)-k)}.
\]
By abuse of notation, we also denote by $\Gamma$ the associated Legendrian link. By
(\ref{eqn:BPS_invariants_vs_lowest_HOMFLY-PT}) and
(\ref{eqn:BPS_invariants_vs_ruling_polynomials}), it suffices to prove
\[
\widetilde{R}_\Gamma(z)=N_\Gamma(z),
\]
where $\widetilde{R}_\Gamma(z)=z^{b(\Gamma)}R_\Gamma(z)$ and $b(\Gamma)$ denotes the number of connected components of $\Gamma$.

Recall from \cite[Def.~2.3]{HR15} that
\[
R_\Gamma(z) = \sum_{\rho} z^{-\chi(\rho)},
\]
where $\rho$ runs over all $\mathbb{Z}/2$-graded normal rulings of $\Gamma$, and
\[
-\chi(\rho) = |S(\rho)| - \#\{\text{right cusps}\},
\]
where \(S(\rho)\) denotes the set of switches of \(\rho\), i.e. the crossings at which the ruling replaces the crossing by a pair of parallel strands.

%\vspace{-0.5cm}
\begin{figure}[!htbp]
\begin{center}
\begin{tikzpicture}[baseline=-0.5ex,scale=0.5]

% ------------------------------------------------------------
% parameters
% ------------------------------------------------------------
\def\XL{-1.2}      % left start of rainbow arcs
\def\X0{3.0}       % start of braid block
\def\Xb{9.0}       % end of twist box
\def\Xone{10}    % b_1 crossing center
\def\Xtwo{12.0}    % b_2 crossing center
\def\Xthree{14}  % b_3 crossing center
\def\Xfour{16}   % b_4 crossing center
\def\XR{17.5}      % right end of braid block
\pgfmathsetmacro{\XM}{(\X0+\XR)/2} % midpoint for rainbow arcs

% y-levels of the three strands
\def\ybot{-1.0}
\def\ymid{0.0}
\def\ytop{1.0}

% ------------------------------------------------------------
% styles
% ------------------------------------------------------------
\tikzset{
  lab/.style={font=\small},
  crosslab/.style={font=\scriptsize},
  boxlab/.style={font=\small}
}

% ------------------------------------------------------------
% left incoming arcs
% ------------------------------------------------------------
\draw[thick] (\XL,1.2) to[out=0,in=180] (\X0,\ybot);
\draw[thick] ({\XL+1.7},1.2) to[out=0,in=180] (\X0,\ymid);
\draw[thick] ({\XL+3.4},1.2) to[out=0,in=180] (\X0,\ytop);

% ------------------------------------------------------------
% crossing macro sigma_1 (bottom/middle)
% center at x = #1, label = #2
% ------------------------------------------------------------
\newcommand{\sigonecross}[2]{%
  \draw[thick] ({#1-1.0},\ytop) -- ({#1+1.0},\ytop);
  \draw[thick] ({#1-1.0},\ybot) to[out=0,in=180] ({#1+1.0},\ymid);
  \draw[white,fill=white] (#1,-0.5) circle (0.16);
  \draw[thick] ({#1-1.0},\ymid) to[out=0,in=180] ({#1+1.0},\ybot);
  \node[crosslab] at (#1,-1.1) {$#2$};
}

% ------------------------------------------------------------
% crossing macro sigma_2 (middle/top)
% center at x = #1, label = #2
% ------------------------------------------------------------
\newcommand{\sigtwocross}[2]{%
  \draw[thick] ({#1-1.0},\ybot) -- ({#1+1.0},\ybot);
  \draw[thick] ({#1-1.0},\ymid) to[out=0,in=180] ({#1+1.0},\ytop);
  \draw[white,fill=white] (#1,0.5) circle (0.16);
  \draw[thick] ({#1-1.0},\ytop) to[out=0,in=180] ({#1+1.0},\ymid);
  \node[crosslab] at (#1,-0.05) {$#2$};
}

% ------------------------------------------------------------
% twist box for sigma_1^{n-2}
% sigma_1 = crossing of strands 1 and 2 (bottom/middle)
% ------------------------------------------------------------
\sigonecross{4}{a_1}
\sigonecross{8}{a_{n-2}}
\draw[thick] (\X0+2,\ytop) -- (\Xb-2,\ytop);
\node[thick] at ({(\X0+\Xb)/2},-0.5) {$\cdots\cdots$};

\draw[thick,green] (\X0+0.2,-1.4) rectangle (\Xb-0.2,1.2);
\node[boxlab,green] at ({(\X0+\Xb)/2},-2) {$\sigma_1^{\,n-2}$};

% ------------------------------------------------------------
% four explicit crossings:
% b_1 = sigma_2, b_2 = sigma_1, b_3 = sigma_1, b_4 = sigma_2
% ------------------------------------------------------------
\sigtwocross{\Xone}{b_1}
\sigonecross{\Xtwo}{b_2}
\sigonecross{\Xthree}{b_3}
\sigtwocross{\Xfour}{b_4}

%-------------------------------------------------------------
% short exit segments to the right
% ------------------------------------------------------------
\draw[thick] ({\Xfour+1.0},\ytop) -- (\XR,\ytop);
\draw[thick] ({\Xfour+1.0},\ymid) -- (\XR,\ymid);
\draw[thick] ({\Xfour+1.0},\ybot) -- (\XR,\ybot);

% ------------------------------------------------------------
% right outgoing hooks
% ------------------------------------------------------------
\draw[thick] (\XR,\ytop)  to[out=0,in=180] ({\XR+0.8},1.2);
\draw[thick] (\XR,\ymid)  to[out=0,in=180] ({\XR+2.5},1.2);
\draw[thick] (\XR,\ybot)  to[out=0,in=180] ({\XR+4.2},1.2);

% ------------------------------------------------------------
% nested rainbow arcs
% ------------------------------------------------------------
\draw[thick]
({\XL+3.4},1.2) to[out=0,in=180] (\XM,2.4) to[out=0,in=180] ({\XR+0.8},1.2);

\draw[thick]
({\XL+1.7},1.2) to[out=0,in=180] (\XM,3.8) to[out=0,in=180] ({\XR+2.5},1.2);

\draw[thick]
(\XL,1.2) to[out=0,in=180] (\XM,5.2) to[out=0,in=180] ({\XR+4.2},1.2);

\end{tikzpicture}
\end{center}
%\vspace{-0.6cm}
\caption{The Legendrian link associated to the $D_n$-singularity: the rainbow closure $\bigl(\sigma_1^{\,n-2}\sigma_2\sigma_1^2\sigma_2\bigr)^>$.}
\label{fig:Legendrian_link_for_D_n-singularity}
\end{figure}

We treat the most nontrivial case $D_n$ (see Figure~\ref{fig:Legendrian_link_for_D_n-singularity}). 
Let $\rho$ be a $\mathbb{Z}/2$-graded normal ruling. 
From the figure, the two innermost cusps always bound the same eye. 
Consequently,
\begin{itemize}
\item either $b_1,b_4\in S(\rho)$;
\item or $b_1,b_4\notin S(\rho)$, in which case either both $b_2,b_3$ are switches or neither is.
\end{itemize}

This gives rise to three disjoint cases:

\begin{enumerate}[wide,labelwidth=!,labelindent=0pt,itemindent=!]

\item $b_1,b_4\in S(\rho)$.  
Replacing the crossings at $b_1,b_4$ by parallel strands and removing the innermost eye corresponds canonically to a ruling $\rho'$ of $(\sigma_1^n)^{>} = A_{n-1}$.  
Moreover,
\[
|S(\rho)| = |S(\rho')| + 2, \qquad -\chi(\rho) = -\chi(\rho') + 1.
\]

\item $b_1,b_4\notin S(\rho)$ and $b_2,b_3\in S(\rho)$.  
After resolving $b_2,b_3$ and removing the innermost eye, the remaining data corresponds canonically to a ruling $\rho'$ of $(\sigma_1^{n-2})^{>} = A_{n-3}$.  
Again,
\[
|S(\rho)| = |S(\rho')| + 2, \qquad -\chi(\rho) = -\chi(\rho') + 1.
\]

\item $b_1,b_2,b_3,b_4\notin S(\rho)$.  
Removing the innermost eye corresponds canonically to a ruling $\rho'$ of $(\sigma_1^{n-2})^{>} = A_{n-3}$ with
\[
|S(\rho)| = |S(\rho')|, \qquad -\chi(\rho) = -\chi(\rho') - 1.
\]

\end{enumerate}

Summing over all rulings, we obtain
\[
R_{D_n}(z) = z R_{A_{n-1}}(z) + z R_{A_{n-3}}(z) + z^{-1} R_{A_{n-3}}(z).
\]
Since $b(D_n) = b(A_{n-1}) + 1 = b(A_{n-3}) + 1$, this becomes
\begin{equation}\label{eqn:inductive_formula_for_ruling_polynomials_of_D_n}
\widetilde{R}_{D_n}(z)
= z^2 \widetilde{R}_{A_{n-1}}(z)
+ z^2 \widetilde{R}_{A_{n-3}}(z)
+ \widetilde{R}_{A_{n-3}}(z).
\end{equation}

\medskip

On the combinatorial side, label the Dynkin diagram of $D_n$ as
\begin{tikzpicture}[baseline=-0.5ex,scale=0.5]

\node at (0,0) {\small$1$};
\draw[thick,blue] (0.2,0)--(0.8,0);
\node at (1,0) {\small$2$};
\draw[thick,blue] (1.2,0)--(1.5,0);
\node at (2,0) {$\cdots$};
\draw[thick,blue] (2.5,0)--(2.8,0);
\node at (3.6,0) {\small$n-2$};
\draw[thick,blue] (4.4,0)--(4.7,0.3);
\node at (5.5,0.3) {\small$n-1$};
\draw[thick,blue] (4.4,0)--(4.7,-0.3);
\node at (5.3,-0.3) {\small$n$};

\end{tikzpicture}.
Recall that $m_k(D_n)$ is the number of independent sets of size $k$ in $D_n$.
We distinguish according to whether the terminal nodes $n$ and $n-1$ are included. 
This yields
\[
m_k(D_n)
= m_k(A_{n-1})
+ m_{k-1}(A_{n-3})
+ m_{k-2}(A_{n-3}).
\]
Equivalently,
\begin{equation}\label{eqn:inductive_formula_for_independence_polynomials_of_D_n}
N_{D_n}(z)
= z^2 N_{A_{n-1}}(z)
+ z^2 N_{A_{n-3}}(z)
+ N_{A_{n-3}}(z),
\end{equation}
since
\[
\delta(D_n)=\left\lfloor \tfrac{n}{2}\right\rfloor + 1
= \delta(A_{n-1})+1
= \delta(A_{n-3})+2.
\]

Comparing \eqref{eqn:inductive_formula_for_ruling_polynomials_of_D_n} and \eqref{eqn:inductive_formula_for_independence_polynomials_of_D_n}, we see that $\widetilde{R}_{D_n}(z)$ and $N_{D_n}(z)$ satisfy the same recursion. 
Since \(\widetilde{R}_{A_m}(z)=N_{A_m}(z)\) by a similar argument, or by \cite[Prop.~7.1]{Kal06}, it follows that
\[
\widetilde{R}_{D_n}(z)=N_{D_n}(z).
\]

The remaining cases are treated similarly or by direct verification.
\end{proof}

\begin{theorem}\label{thm:log-concavity_for_ADE}
Conjecture~\ref{conj:log-concavity_for_local_BPS_invariants} holds for all ADE singularities.
\end{theorem}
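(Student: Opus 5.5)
The plan is to use Proposition~\ref{prop:BPS_invariants_for_ADE}. It identifies the reversed sequence $(n_{\delta-h})_h$ with the coefficient sequence $(m_k(\Gamma))_k$ of the independence polynomial $I(\Gamma;x)$. Reversing a sequence preserves both log-concavity and the absence of internal zeros, so it suffices to prove these two properties for the coefficients of $I(\Gamma;x)$, for every ADE Dynkin diagram $\Gamma$. The exceptional cases are finite checks against the explicit lists in Proposition~\ref{prop:BPS_invariants_for_ADE}. For instance, for $E_6$ the sequence $(5,10,6,1)$ satisfies $100\ge 30$ and $36\ge 10$; $E_7$ and $E_8$ are checked in the same way. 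The real work is the two infinite families. There I intend to show that $I(\Gamma;x)$ is real-rooted. Since its coefficients are nonnegative, Newton's inequalities then give log-concavity, and the coefficients $m_0,\dots,m_{\delta}$ are all positive, so there are no internal zeros.

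For $A_m$, write $p_m(x):=I(A_m;x)$, the independence polynomial of the path on $m$ vertices. Deleting an end vertex gives the recursion
\[
p_m=p_{m-1}+x\,p_{m-2},\qquad p_0=1,\quad p_{-1}=1.
\]
This is a three-term recursion of orthogonal-polynomial type. A standard induction shows that all roots of $p_m$ are real, negative and simple, and that the roots of $p_{m-1}$ strictly interlace those of $p_m$. Explicitly, one can identify $p_m$ with a rescaled Chebyshev polynomial of the second kind; its roots are $-1/\bigl(4\cos^2(\pi j/(m+2))\bigr)$ for $1\le j\le\lfloor (m+1)/2\rfloor$. This settles type $A$. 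As a cross-check, it also reproduces the binomial formulas \eqref{eq:BPS_A_odd} and \eqref{eq:BPS_A_even}.

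For $D_n$, apply the vertex recursion $I(G)=I(G-v)+x\,I(G-N[v])$ at the leaf $v=n$ of the fork. Removing $v$ leaves $A_{n-1}$. Removing $N[v]=\{n,n-2\}$ leaves $A_{n-3}$ together with the isolated vertex $n-1$. Hence
\[
I(D_n;x)=p_{n-1}(x)+x(1+x)\,p_{n-3}(x)=p_{n-2}(x)+x(2+x)\,p_{n-3}(x).
\]
The main obstacle is proving that this combination is real-rooted. The plan is a sign-count argument based on the interlacing from the type $A$ step. Write $r_1>\dots>r_s$ for the negative roots of $p_{n-3}$. At each $r_j$ we have $I(D_n;r_j)=p_{n-2}(r_j)$, and these values alternate in sign because $p_{n-3}$ interlaces $p_{n-2}$. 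At $x=0$ the value is $1>0$. Near $-\infty$ the sign is governed by the leading term, and the factor $x(2+x)$ contributes one extra sign change on the far side of $-2$, or at $-2$ itself. Combining these observations with the intermediate value theorem should produce $\deg I(D_n)=\lfloor n/2\rfloor+1$ real roots.

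If the sign bookkeeping near $-2$ becomes delicate, I would try a fallback. One option is to factor directly in small parities. Another is to prove log-concavity straight from the binomial expression \eqref{eq:BPS_D_odd}/\eqref{eq:BPS_D_even}, writing the sequence as a sum of log-concave sequences with a common mode and checking the cross terms. I regard the real-rootedness route via interlacing as the intended one.
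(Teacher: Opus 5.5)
Your treatment of types $A$ and $E$ is fine. For type $A$, real-rootedness of $p_m$ is, after reversing coefficients, the same as the paper's factorization of $\sum_h n_h w^h$ ($w=z^2$) into linear factors $w+4\cos^2\bigl(\pi j/(m+2)\bigr)$.

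The gap is in type $D$: the statement you plan to prove there is false, because $I(D_n;x)$ is not real-rooted in general.
\begin{itemize}
\item For $D_4=K_{1,3}$ you get $I(D_4;x)=1+4x+3x^2+x^3$. Its derivative $3(x+1)^2+1$ is positive, so it has a single real root.
\item $I(D_6;x)=1+6x+10x^2+5x^3+x^4$ violates Newton's inequality $m_3^2\ge \tfrac43\cdot 2\,m_2m_4$, since $25<80/3$.
\end{itemize}
In terms of your sign count, $\deg I(D_n)=\lfloor n/2\rfloor+1$ exceeds the number of roots of $p_{n-3}$ by two. Interlacing only produces roots to the right of the smallest root of $p_{n-3}$. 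The two remaining roots would have to lie to its left, and for $n=4$ they do not exist: $I(D_4;x)\le I(D_4;-1)=p_2(-1)=-1$ on $(-\infty,-1]$. The trivalent node of $D_n$ is an induced claw, so you are outside the claw-free setting where Chudnovsky--Seymour guarantees real-rootedness of independence polynomials. No interlacing argument of this kind can work.

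So for type $D$ you need an argument for log-concavity itself, and your fallback does not yet provide one. A sum of log-concave sequences with a common mode need not be log-concave, and controlling the cross terms is the whole content of the claim. The paper proves it directly from the closed form
\[
m_k=\frac{(n-k-1)!}{k!\,(n-2k+2)!}\,f(n,k),\qquad f(n,k)=n(n-2k+1)(n-2k+2)+k(k-1)(n-k).
\]
For $k\le\frac{n-1}{2}$, it reduces $m_k^2\ge m_{k-1}m_{k+1}$ to the sufficient inequality
\[
\frac{f(n,k-1)f(n,k+1)-f(n,k)^2}{f(n,k)^2}<\frac{n-2k-1}{k(n-k)}+\frac{4n-8k+10}{(n-2k+2)(n-2k+1)}.
\]
It establishes this by explicit polynomial bounds for $n>25$, and checks $n\le 25$ and the boundary case $k=n/2$ by direct computation. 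An explicit estimate of this sort, not real-rootedness, is the missing ingredient in your plan.
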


\begin{proof}
We treat each type separately.

\medskip
\noindent\textbf{Type \(A\) (odd):} \(A_{2\delta-1}\), \(\delta\ge 1\). Here
$n_h = \binom{\delta+h}{\delta-h}$.
The log-concavity can be checked directly from the identity
\[
\binom{\delta+h}{\delta-h}^2
-
\binom{\delta+h-1}{\delta-h+1}\binom{\delta+h+1}{\delta-h-1}
=
\frac{(2h+2)(\delta+h)!(\delta-h)!}{(\delta-h+1)!^2(\delta+h+1)}
\ge 0.
\]

\begin{remark}\label{rem:A_odd_factorization}
Alternatively, the log-concavity for type \(A_{2\delta-1}\) can also be deduced from a factorization as in Theorem~\ref{thm:log-concavity_for_torus_knots}. Set \(w=z^2\) and \(p_\delta(w):=\sum_{h=0}^{\delta}\binom{\delta+h}{2h}w^h\). Then \(p_\delta\) satisfies the Chebyshev-type recurrence
\[
p_{\delta+1}(w)=(2+w)\,p_\delta(w)-p_{\delta-1}(w),\qquad p_0=1,\; p_1=1+w.
\]
Substituting \(w=t-2\) gives \(p_\delta(t-2)=U_\delta(t/2)-U_{\delta-1}(t/2)\), where \(U_\delta\) is the Chebyshev polynomial of the second kind. From the explicit zeros of \(U_\delta-U_{\delta-1}\), one obtains
\[
p_\delta(w)=\prod_{j=0}^{\delta-1}\Bigl(w+4\sin^2\frac{(2j+1)\pi}{2(2\delta+1)}\Bigr).
\]
Each linear factor \(w+c_j\) with \(c_j>0\) is log-concave as a polynomial in~\(z\), and the multiplicativity of log-concavity yields the result.
\end{remark}

\medskip
\noindent\textbf{Type \(A\) (even):} \(A_{2\delta}\), \(\delta\ge 1\). Here
$n_h = \binom{\delta+h+1}{\delta-h}$.
The log-concavity follows from the torus-knot case (Theorem~\ref{thm:log-concavity_for_torus_knots}), since \(A_{2\delta}\) is the singularity
$y^2-x^{2\delta+1}=0$ (after replacing $x$ by $-x$), with link \(T_{2,2\delta+1}\).

\medskip
\noindent\textbf{Type \(E\):} \(E_6\), \(E_7\), \(E_8\). The log-concavity is verified directly from the explicit sequences in \eqref{eq:BPS_E6}--\eqref{eq:BPS_E8}:
\begin{itemize}[wide,labelwidth=!,labelindent=0pt,itemindent=!]
\item \(E_6\): \(10^2=100\ge 5\cdot 6=30\), and \(6^2=36\ge 10\cdot 1=10\);
\item \(E_7\): \(11^2=121\ge 2\cdot 15=30\), \(15^2=225\ge 11\cdot 7=77\), and \(7^2=49\ge 15\cdot 1=15\);
\item \(E_8\): \(21^2=441\ge 7\cdot 21=147\), \(21^2=441\ge 21\cdot 8=168\), and \(8^2=64\ge 21\cdot 1=21\).
\end{itemize}

\medskip
\noindent\textbf{Type \(D\):} \(D_n\), \(n\ge 4\). Write
\[
\delta = \left\lfloor \frac{n}{2}\right\rfloor + 1,\qquad k=\delta-h.
\]
Then by Proposition~\ref{prop:BPS_invariants_for_ADE},
\begin{equation}\label{eq:m_k_D_n}
m_k = n_h
=
\binom{n-k}{k} + \binom{n-k-1}{k-1} + \binom{n-k}{k-2},
\end{equation}
where \(\binom{a}{b}=0\) for \(b<0\) or \(b>a\). 
Recall that \(m_k\) is the coefficient of \(x^k\) in \(I(D_n;x)\) (see Remark~\ref{rem:BPS_invariants_vs_independence_polynomial}).

After simplification, we have
\begin{equation}\label{eq:m_k_D_n_simplified}
m_k = \frac{(n-k-1)!}{k!\,(n-2k+2)!}\,f(n,k),
\end{equation}
where
\begin{equation}\label{eq:f_n_k}
f(n,k) := n(n-2k+1)(n-2k+2) + k(k-1)(n-k).
\end{equation}
We must verify
\[
m_k^2 \ge m_{k-1}m_{k+1}
\]
for all admissible \(k\).

\textit{Trivial range.}
If \(k\le 0\), then \(m_{k-1}=0\), so the inequality is trivial. Similarly, if \(k\ge \frac{n+1}{2}\), then \(m_{k+1}=0\), so the inequality is again trivial. When \(n\) is even and \(k=\frac{n}{2}\), we have
\[
m_{k+1}=1,\qquad
m_k=\frac{k^2-k+4}{2},\qquad
m_{k-1}=\frac{k^4-2k^3+23k^2+2k}{24},
\]
and a direct computation verifies the inequality.

\textit{Reduction to a polynomial inequality.}
For \(k\le \frac{n-1}{2}\), the condition \(m_k^2 \ge m_{k-1}m_{k+1}\) can be rewritten, using \eqref{eq:m_k_D_n_simplified}, as
\begin{equation}\label{eq:D_n_sufficient_condition}
\frac{f(n,k-1)\,f(n,k+1)}{f(n,k)^2}
<
\frac{(k+1)(n-k-1)}{k(n-k)}
\cdot
\frac{(n-2k+4)(n-2k+3)}{(n-2k+2)(n-2k+1)}.
\end{equation}

Since \(n-2k-1\ge 0\), it suffices to prove
\begin{equation}\label{eq:D_n_sufficient_condition_simplified}
\frac{f(n,k-1)\,f(n,k+1)-f(n,k)^2}{f(n,k)^2}
<
\frac{n-2k-1}{k(n-k)} + \frac{4n-8k+10}{(n-2k+2)(n-2k+1)}.
\end{equation}

\textit{Explicit computation.}
Expanding \(f(n,k)\) as a polynomial in \(k\), we have
\[
f(n,k) = -k^3 + (5n+1)k^2 - (4n^2+7n)k + n^3 + 3n^2 + 2n.
\]
Set
\[
F(n,k):= f(n,k-1)\,f(n,k+1) - f(n,k)^2.
\]
A direct computation using the discrete Taylor expansion gives
\begin{equation}\label{eq:F_n_k}
F(n,k) = f(n,k)\,f''(n,k) - (f'(n,k))^2 + \tfrac{1}{4}(f''(n,k))^2 + 2f'(n,k) - 1,
\end{equation}
where \(f'\), \(f''\) denote derivatives with respect to \(k\). Hence
\begin{equation}\label{eq:F_n_k_explicit}
F(n,k)
=
-3k^{4}+(20n+4)k^{3}-(50n^{2}+20n-1)k^{2}+(34n^{3}+60n^{2}-8n-2)k-6n^{4}-24n^{3}-6n^{2}.
\end{equation}

\textit{Asymptotic analysis for large \(n\).}
Setting \(k=dn\) with \(0<d<1/2\), we have
\begin{eqnarray*}
f(n,k)
&=&
(1-4d+5d^{2}-d^{3})n^{3}+(3-7d+d^{2})n^{2}+2n \\
&=&
\left(\frac{1}{9}+\frac{9}{2}\left(\frac{4}{9}-d\right)^{2}+d^{2}\left(\frac{1}{2}-d\right)\right)n^{3}
+\left(-\frac{1}{4}+\left(\frac{1}{2}-d\right)\left(\frac{13}{2}-d\right)\right)n^{2}+2n \\
&\ge&
\frac{1}{9}n^{3}-\frac{1}{4}n^{2}+2n,
\end{eqnarray*}
and
\begin{eqnarray*}
&&F(n,k)\\
&=&
(-6+34d-50d^{2}+20d^{3}-3d^{4})n^{4}+(-24+60d-20d^{2}+4d^{3})n^{3}+(-6-8d+d^{2})n^{2}-2dn \\
&=&
\left(\frac{49}{40}-40\left(\frac{17}{40}-d\right)^{2}-20d^{2}\left(\frac{1}{2}-d\right)-3d^{4}\right)n^{4}
+\left(\frac{3}{2}-(51-18d)\left(\frac{1}{2}-d\right)-4d^{2}\left(\frac{1}{2}-d\right)\right)n^{3} \\
&&
+\left(-\frac{39}{4}-\left(\frac{1}{2}-d\right)\left(\frac{15}{2}-d\right)\right)n^{2}-2dn \\
&\le&
\frac{49}{40}n^{4} + \frac{3}{2}n^{3}-\frac{39}{4}n^{2}.
\end{eqnarray*}

Hence
\[
\frac{F(n,k)}{f(n,k)^2}
\leq
\frac{\frac{49}{40}n^{4} + \frac{3}{2}n^{3}-\frac{39}{4}n^{2}}
{\left(\frac{1}{9}n^3 - \frac{1}{4}n^2 + 2n\right)^2}.
\]

On the other hand, the right-hand side of \eqref{eq:D_n_sufficient_condition_simplified} satisfies
\[
\frac{n-2k-1}{k(n-k)} + \frac{4n-8k+10}{(n-2k+2)(n-2k+1)}
>
\begin{cases}
\dfrac{8n-16}{3n^{2}}+\dfrac{4n}{(n+2)(n+1)} & 0 < k \leq \dfrac{n}{4}, \\[6pt]
\dfrac{8n}{(n+4)(n+2)} & \dfrac{n}{4}< k \leq \dfrac{n-1}{2}.
\end{cases}
\]

When \(n>25\), in both cases we have
\[
\frac{n-2k-1}{k(n-k)} + \frac{4n-8k+10}{(n-2k+2)(n-2k+1)}
>
\frac{173}{30n},
\]
while
\[
\frac{F(n,k)}{f(n,k)^2}
\leq
\frac{\frac{49}{40}n^{4} + \frac{3}{2}n^{3}-\frac{39}{4}n^{2}}
{\left(\frac{1}{9}n^3 - \frac{1}{4}n^2 + 2n\right)^2}
<
\frac{265}{2n^{2}}.
\]
Since
\[
\frac{265}{2n^2} < \frac{173}{30n}
\qquad\text{for } n > \frac{265}{2}\cdot\frac{30}{173}\approx 22.98,
\]
the inequality \eqref{eq:D_n_sufficient_condition_simplified} holds for all \(n>25\).

\textit{Finite check.}
For $n\le 25$, the required log-concavity can be verified by direct computation.
This completes the proof for type \(D\).
\end{proof}

\subsection{A multiplicative property for ruling polynomials}

Our convention is to label $N$ parallel strands from bottom to top by $1,2,\cdots,N$.
Let $\beta_1,\beta_2\in \Br_n^+$, and $\gamma\in \Br_m^+$, and $N = n+m-1$.
Denote by $\tilde{\gamma}\in \Br_N^+$ the positive braid obtained from $\gamma$ by adding $n-1$ parallel strands from the bottom. In other words, if $\gamma = \sigma_{i_1}\cdots\sigma_{i_k}$ with $1\leq i_j\leq m-1$, then $\tilde{\gamma} = \sigma_{i_1+n-1}\cdots \sigma_{i_k+n-1}\in \Br_N^+$.
See Figure~\ref{fig:multiplicativity_for_ruling_polynomials} for an illustration.

%\vspace{-0.5cm}
\begin{figure}[!htbp]
\begin{center}
\begin{tikzpicture}[baseline=-0.5ex,scale=0.37]

% ------------------------------------------------------------
% parameters
% ------------------------------------------------------------
\def\XL{1.4}      % left start of rainbow arcs
\def\X0{9}       % start of braid block
\def\Xone{10}    % a_1 crossing center
\def\Xtwo{12.0}    % a_2 crossing center
\def\Xthree{14}  % a_3 crossing center
\def\Xfour{16}   % a_4 crossing center
\def\Xfive{18}   % a_5 crossing center
\def\Xsix{20}    % a_6 crossing center
\def\Xseven{22}    % a_7 crossing center
\def\Xeight{24}  % a_8 crossing center
\def\Xnine{26}   % a_9 crossing center
\def\Xten{28}   % a_{10} crossing center
\def\Xeleven{30}   % a_{11} crossing center
\def\Xtwelve{32}   % a_{12} crossing center
\def\XR{33}      % right end of braid block
\pgfmathsetmacro{\XM}{(\X0+\XR)/2} % midpoint for rainbow arcs

% y-levels of the three strands
\def\yone{-2.0}
\def\ytwo{-1.0}
\def\ythree{0}
\def\yfour{1.0}
\def\yfive{2.0}

% ------------------------------------------------------------
% styles
% ------------------------------------------------------------
\tikzset{
  lab/.style={font=\small},
  crosslab/.style={font=\scriptsize},
  boxlab/.style={font=\small}
}

% ------------------------------------------------------------
% left incoming arcs
% ------------------------------------------------------------
\draw[thick] (\XL,2.2) to[out=0,in=180] (\X0,\yone);
\draw[thick] ({\XL+1.7},2.2) to[out=0,in=180] (\X0,\ytwo);
\draw[thick] ({\XL+3.4},2.2) to[out=0,in=180] (\X0,\ythree);
\draw[thick] ({\XL+5.1},2.2) to[out=0,in=180] (\X0,\yfour);
\draw[thick] ({\XL+6.8},2.2) to[out=0,in=180] (\X0,\yfive);

% ------------------------------------------------------------
% crossing macro sigma_1
% center at x = #1, label = #2
% ------------------------------------------------------------
\newcommand{\sigonecross}[2]{%
  \draw[thick] ({#1-1.0},\ythree) -- ({#1+1.0},\ythree);
  \draw[thick] ({#1-1.0},\yfour) -- ({#1+1.0},\yfour);
  \draw[thick] ({#1-1.0},\yfive) -- ({#1+1.0},\yfive);
  \draw[thick] ({#1-1.0},\yone) to[out=0,in=180] ({#1+1.0},\ytwo);
  \draw[white,fill=white] (#1,-1.5) circle (0.16);
  \draw[thick] ({#1-1.0},\ytwo) to[out=0,in=180] ({#1+1.0},\yone);
  \node[crosslab] at (#1,-1.1) {$#2$};
}

% ------------------------------------------------------------
% crossing macro sigma_2
% center at x = #1, label = #2
% ------------------------------------------------------------
\newcommand{\sigtwocross}[2]{%
  \draw[thick] ({#1-1.0},\yone) -- ({#1+1.0},\yone);
  \draw[thick] ({#1-1.0},\yfour) -- ({#1+1.0},\yfour);
  \draw[thick] ({#1-1.0},\yfive) -- ({#1+1.0},\yfive);
  \draw[thick] ({#1-1.0},\ytwo) to[out=0,in=180] ({#1+1.0},\ythree);
  \draw[white,fill=white] (#1,-0.5) circle (0.16);
  \draw[thick] ({#1-1.0},\ythree) to[out=0,in=180] ({#1+1.0},\ytwo);
  \node[crosslab] at (#1,-0.05) {$#2$};
}

% ------------------------------------------------------------
% crossing macro sigma_3
% center at x = #1, label = #2
% ------------------------------------------------------------
\newcommand{\sigthreecross}[2]{%
  \draw[thick] ({#1-1.0},\yone) -- ({#1+1.0},\yone);
  \draw[thick] ({#1-1.0},\ytwo) -- ({#1+1.0},\ytwo);
  \draw[thick] ({#1-1.0},\yfive) -- ({#1+1.0},\yfive);
  \draw[thick] ({#1-1.0},\ythree) to[out=0,in=180] ({#1+1.0},\yfour);
  \draw[white,fill=white] (#1,0.5) circle (0.16);
  \draw[thick] ({#1-1.0},\yfour) to[out=0,in=180] ({#1+1.0},\ythree);
  \node[crosslab] at (#1,-0.05) {$#2$};
}

% ------------------------------------------------------------
% crossing macro sigma_4
% center at x = #1, label = #2
% ------------------------------------------------------------
\newcommand{\sigfourcross}[2]{%
  \draw[thick] ({#1-1.0},\yone) -- ({#1+1.0},\yone);
  \draw[thick] ({#1-1.0},\ytwo) -- ({#1+1.0},\ytwo);
  \draw[thick] ({#1-1.0},\ythree) -- ({#1+1.0},\ythree);
  \draw[thick] ({#1-1.0},\yfour) to[out=0,in=180] ({#1+1.0},\yfive);
  \draw[white,fill=white] (#1,1.5) circle (0.16);
  \draw[thick] ({#1-1.0},\yfive) to[out=0,in=180] ({#1+1.0},\yfour);
  \node[crosslab] at (#1,-0.05) {$#2$};
}

% ------------------------------------------------------------
% twelve explicit crossings:
% ------------------------------------------------------------
\sigonecross{\Xone}{}
\sigonecross{\Xtwo}{}
\sigtwocross{\Xthree}{}
\sigtwocross{\Xfour}{}
\sigthreecross{\Xfive}{}
\sigthreecross{\Xsix}{}
\sigfourcross{\Xseven}{}
\sigfourcross{\Xeight}{}
\sigthreecross{\Xnine}{}
\sigthreecross{\Xten}{}
\sigtwocross{\Xeleven}{}
\sigonecross{\Xtwelve}{}

% ------------------------------------------------------------
% right outgoing hooks
% ------------------------------------------------------------
\draw[thick] (\XR,\yfive)  to[out=0,in=180] ({\XR+0.8},2.2);
\draw[thick] (\XR,\yfour)  to[out=0,in=180] ({\XR+2.5},2.2);
\draw[thick] (\XR,\ythree)  to[out=0,in=180] ({\XR+4.2},2.2);
\draw[thick] (\XR,\ytwo)  to[out=0,in=180] ({\XR+5.9},2.2);
\draw[thick] (\XR,\yone)  to[out=0,in=180] ({\XR+7.6},2.2);

% ------------------------------------------------------------
% nested rainbow arcs
% ------------------------------------------------------------
\draw[thick]
({\XL+6.8},2.2) to[out=0,in=180] (\XM,3.4) to[out=0,in=180] ({\XR+0.8},2.2);

\draw[thick]
({\XL+5.1},2.2) to[out=0,in=180] (\XM,4.8) to[out=0,in=180] ({\XR+2.5},2.2);

\draw[thick]
({\XL+3.4},2.2) to[out=0,in=180] (\XM,6.2) to[out=0,in=180] ({\XR+4.2},2.2);

\draw[thick]
({\XL+1.7},2.2) to[out=0,in=180] (\XM,7.6) to[out=0,in=180] ({\XR+5.9},2.2);

\draw[thick]
(\XL,2.2) to[out=0,in=180] (\XM,9) to[out=0,in=180] ({\XR+7.6},2.2);

% ------------------------------------------------------------
% twist box for \beta_1, \tilde{\gamma}, \beta_2
% ------------------------------------------------------------
\draw[thick,green] (\Xone-0.3,-2.2) rectangle (\Xsix+0.3,1.2);
\node[boxlab,green] at ({(\Xone+\Xsix)/2},-2.7) {$\beta_1$};

\draw[thick,green] (\Xseven-0.3,0.8) rectangle (\Xeight+0.3,2.1);
\node[boxlab,green] at ({(\Xseven+\Xeight)/2},2.5) {$\gamma$};

\draw[thick,green] (\Xnine-0.3,-2.2) rectangle (\Xtwelve+0.3,1.2);
\node[boxlab,green] at ({(\Xnine+\Xtwelve)/2},-2.7) {$\beta_2$};

\end{tikzpicture}
\end{center}
%\vspace{-0.5cm}
\caption{An illustration for the rainbow closure $(\beta_1\tilde{\gamma}\beta_2)^{>}$: in the figure, $n=4$, $m=2$, $\beta_1 = \sigma_1^2\sigma_2^2\sigma_3^2$, $\beta_2 = \sigma_3^2\sigma_2\sigma_1$ $\in \Br_4^+$, $\gamma = \sigma_1^2\in \Br_2^+$, hence $\tilde{\gamma} = \sigma_4^2\in \Br_5^+$.}
\label{fig:multiplicativity_for_ruling_polynomials}
\end{figure}

\begin{proposition}[Multiplicativity for ruling polynomials]\label{prop:multiplicativity_for_ruling_polynomials}
We have 
\[
R_{(\beta_1\tilde{\gamma}\beta_2)^{>}}(z) = zR_{\gamma^>}(z)R_{(\beta_1\beta_2)^{>}}(z).
\]
In particular, if Conjecture~\ref{conj:log-concavity_of_ruling_polynomials} holds for $\gamma^{>}$ and $(\beta_1\beta_2)^{>}$, then it also holds for $(\beta_1\tilde{\gamma}\beta_2)^{>}$.
\end{proposition}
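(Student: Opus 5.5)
The plan is to realize $(\beta_1\tilde{\gamma}\beta_2)^{>}$, up to Legendrian isotopy, as a Legendrian connected sum of $(\beta_1\beta_2)^{>}$ and $\gamma^{>}$. Since the $\mathbb{Z}/2$-graded ruling polynomial is a Legendrian isotopy invariant, the formula then reduces to a connected-sum formula for normal rulings. The exponent bookkeeping is consistent with this. The composite has $N=n+m-1$ right cusps. If a ruling $\rho$ splits as $(\rho_1,\rho_2)$ with $S(\rho)=S(\rho_1)\sqcup S(\rho_2)$, then
\[
-\chi(\rho)=|S(\rho)|-N=(|S(\rho_1)|-n)+(|S(\rho_2)|-m)+1 .
\]
This is exactly the extra factor $z$ in the statement; geometrically it is the boundary connected sum of ruling surfaces, which lowers the Euler characteristic by one.

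\textbf{Step 1 (topology of the front).} In Figure~\ref{fig:multiplicativity_for_ruling_polynomials}, the strands $n+1,\dots,N$ run straight through the $\beta_1$ and $\beta_2$ boxes and close up via the innermost $m-1$ rainbow arcs. They interact only with strand $n$, and only inside $\tilde{\gamma}$. I would use front Reidemeister moves to isotope the diagram so that $\tilde{\gamma}$ together with these $m-1$ arcs is pulled off as a separate local piece. This piece is attached to strand $n$ of $(\beta_1\beta_2)^{>}$ at a single point, forming a copy of $\gamma^{>}$ whose bottom strand is strand $n$. The result is the connected sum of $(\beta_1\beta_2)^{>}$ and $\gamma^{>}$ along a cusp, in the standard sense.

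\textbf{Step 2 (rulings of a connected sum).} I would establish a bijection $\NR(\Lambda_1\#\Lambda_2)\cong \NR(\Lambda_1)\times\NR(\Lambda_2)$ that preserves switch sets. The forward map cuts the ruling at the connecting neck, where exactly one pair of strands passes, and caps off each side. Normality and the $\mathbb{Z}/2$-grading are local conditions at switches, so they are preserved. The inverse map glues two rulings along the two paired strands through the neck.

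To avoid depending on the isotopy, I would run a direct check alongside. In $\cW(\beta_1\tilde{\gamma}\beta_2)$, throughout the $\beta_1$ and $\beta_2$ portions the walk fixes the pairing of the upper strands $n+1,\dots,N$ with their rainbow partners, because no crossings of $\beta_1,\beta_2$ involve those positions. Restricting the pairings to positions $\{n,\dots,N\}$ in the $\tilde{\gamma}$ portion should then give a walk in $\cW(\gamma)$. Forgetting $\tilde{\gamma}$ should give a walk in $\cW(\beta_1\beta_2)$. Formula \eqref{eqn:ruling_polynomial_of_rainbow_closure_of_positive_braid} then gives the product.

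\textbf{Main obstacle.} The hard point is showing that this decomposition is really a bijection. The difficulty is strand $n$: it is shared, and its partner during the $\tilde{\gamma}$ region is a priori an arbitrary lower strand coming out of $\beta_1$. I expect the normality (interlacing) condition to force the partner of whichever strand sits at position $n$ to behave, relative to the top block, like the outermost strand paired with a cusp of $\gamma^{>}$. I would try to prove this first, checking the two switch configurations allowed by normality. I would verify it on the worked example, where the proposition combined with Example~\ref{ex:computation_of_ruling_polynomial} factors $\widetilde R_\Lambda$, with $\widetilde R_{\gamma^{>}}=\widetilde R_{A_1}=1+z^2$.

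\textbf{Consequence.} Once the formula holds, $\widetilde R_{(\beta_1\tilde\gamma\beta_2)^{>}}=\widetilde R_{\gamma^{>}}\widetilde R_{(\beta_1\beta_2)^{>}}$, since the number of components is $b=b_1+b_2-1$. The final claim then follows from multiplicativity of log-concavity with no internal zeros.
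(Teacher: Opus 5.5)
Your connected-sum route is sound, but it is not the paper's route. The paper works directly on the given front. Because the rainbow arcs have no crossings, the $i$-th innermost left and right cusps bound the same eye. Hence the $m-1$ innermost eyes, together with the one eye they share with the rest, determine a normal ruling $\rho_1$ of $\gamma^{>}$. Removing them after resolving the switches of $\rho_1$ leaves a normal ruling $\rho_2$ of $(\beta_1\beta_2)^{>}$. This is your ``direct check'' phrased with eyes instead of walks. It needs no isotopy, and it gives an explicit bijection of rulings, hence of walks, compatible with the cell decomposition of $X(\Delta\beta,w_0)$.

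Your route instead uses Legendrian invariance of $R_\Lambda$ and the cut-at-the-neck argument. That explains the factor $z$ conceptually and works for any Legendrian connected sum. However, it is only clean in side-by-side form, where a vertical line through the neck meets exactly two points, which must then be paired. Step 1 should therefore be made precise. Since $\beta_1,\beta_2$ never touch positions $>n$, a planar isotopy of the front slides the $m-1$ innermost left cusps rightward past $\beta_1$ and the matching right cusps leftward past $\beta_2$. This exhibits the $\gamma$-part as a Legendrian $1$-$1$ tangle inserted into the strand at position $n$. Getting from there to side-by-side form still uses the standard (but nontrivial) uniqueness of Legendrian connected sums. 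You can bypass this entirely:
\begin{itemize}
\item the same picture shows the underlying link is the connected sum, and $P$ is multiplicative under connected sum;
\item $\tb\bigl((\beta_1\tilde\gamma\beta_2)^{>}\bigr)=\tb\bigl((\beta_1\beta_2)^{>}\bigr)+\tb(\gamma^{>})+1$;
\item Remark~\ref{rem:ruling_polynomial_vs_HOMFLY-PT}, with its lower bound on $a$-degrees, then gives $zR_{(\beta_1\tilde\gamma\beta_2)^{>}}=\bigl(zR_{\gamma^{>}}\bigr)\bigl(zR_{(\beta_1\beta_2)^{>}}\bigr)$ at once.
\end{itemize}

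Your ``main obstacle'' also dissolves, and nothing has to be forced by normality. In a rainbow closure every braid position is paired with a rainbow arc, since each eye keeps its arc. So the partner of position $n$ at the entrance of $\tilde\gamma$ is an outer arc $N+j_0$ with $j_0\ge m$, not a strand coming out of $\beta_1$. The inner arcs $N+1,\dots,N+m-1$ sit at positions $N,\dots,n+1$ both at the entrance and at the exit of $\tilde\gamma$, because $\beta_1,\beta_2$ fix those positions and $p_0=p_\ell=w_0$. Positions $1,\dots,n-1$ are frozen inside $\tilde\gamma$. The Bruhat test $p^{-1}(i)<p^{-1}(i+1)$ only sees the relative order of arc indices, and $j_0$ exceeds every inner index. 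So relabelling $N+j_0$ as the outermost arc of $\gamma^{>}$ gives a bijection $\cW(\beta_1\tilde\gamma\beta_2)\cong\cW(\gamma)\times\cW(\beta_1\beta_2)$ with additive switch counts. This is exactly the ``single shared eye'' in the paper's proof.
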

As an application, Example~\ref{ex:computation_of_ruling_polynomial} can be computed easily using Proposition~\ref{prop:multiplicativity_for_ruling_polynomials}.

\begin{proof}
See Figure~\ref{fig:multiplicativity_for_ruling_polynomials} for an illustration.
In any normal ruling $\rho$ of $(\beta_1\tilde{\gamma}\beta_2)^{>}$, 
for each $i$, the pair of $i$-th innermost cusps bound the same eye.
It follows that the $(m-1)$ innermost eyes of $\rho$ determine a unique normal ruling $\rho_1$ of $\gamma^{>}$. 
Moreover, after resolving the switches of $\rho_1$ (which are supported inside the $\gamma$-region) and removing these $(m-1)$ innermost eyes, the remaining data of $\rho$ amounts to a normal ruling $\rho_2$ of $(\beta_1\beta_2)^{>}$.
In particular, $-\chi(\rho) = |S(\rho)| - N = (|S(\rho_1)| - m) + (|S(\rho_2)| - n) + 1 = -\chi(\rho_1) - \chi(\rho_2) + 1$. 
Conversely, any such $\rho_1,\rho_2$ gives rise to a normal ruling $\rho$.
Geometrically, this decomposition reflects the fact that the rainbow closure separates into two nested regions corresponding to $\gamma$ and $\beta_1\beta_2$, which interact through a single shared eye, producing the extra factor $z$.
Thus,
\[
R_{(\beta_1\tilde{\gamma}\beta_2)^{>}} = \sum_{\rho} z^{-\chi(\rho)} = \sum_{\rho_1,\rho_2} z z^{-\chi(\rho_1)}z^{-\chi(\rho_2)} = zR_{\gamma^>}(z)R_{(\beta_1\beta_2)^{>}}(z),
\]
where $\rho$, $\rho_1$, $\rho_2$ run over all $\mathbb{Z}/2$-graded normal rulings of $(\beta_1\tilde{\gamma}\beta_2)^{>}$, $\gamma^>$, $(\beta_1\beta_2)^>$ respectively.
The final statement follows immediately from the multiplicativity of log-concavity.
\end{proof}

\begin{corollary}
If $\beta = \sigma_{i_1}^{e_1}\cdots \sigma_{i_M}^{e_M} \in \Br_N^+$, and there exists $1\leq k\leq M$ such that 
$i_1<i_2<\cdots < i_k > i_{k+1} > \cdots > i_M$,
then Conjecture~\ref{conj:log-concavity_of_ruling_polynomials} holds for $\beta^{>}$.
\end{corollary}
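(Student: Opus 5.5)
We argue by induction on the number $M$ of syllables, repeatedly applying Proposition~\ref{prop:multiplicativity_for_ruling_polynomials} to peel off the syllable with the largest index. We may assume consecutive indices differ, so the sequence $i_1<\cdots<i_k>\cdots>i_M$ is strictly unimodal with peak $i_k$. We may also assume every generator $\sigma_1,\dots,\sigma_{N-1}$ occurs: otherwise the front splits into vertically stacked rainbow closures. These are nested but unlinked, and the rulings factor, so the normalized ruling polynomial is a product. In that case we conclude by the multiplicativity of log-concavity. (If one prefers, this split case is also formally a special case of the proposition with $\gamma$ the identity braid, up to a harmless bookkeeping of $z$-powers.)

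The core step takes the peak syllable $\sigma_{i_k}^{e_k}$. By strict unimodality, no other syllable uses an index $\ge i_k$; this is the key point. Put $n:=i_k$, $m:=2$, so $N' = n+m-1 = i_k+1$. Since all generators are present and indices are bounded by $i_k$, we have $N=i_k+1$. Set $\beta_1:=\sigma_{i_1}^{e_1}\cdots\sigma_{i_{k-1}}^{e_{k-1}}$ and $\beta_2:=\sigma_{i_{k+1}}^{e_{k+1}}\cdots\sigma_{i_M}^{e_M}$, both in $\Br_{n}^+$. Set $\gamma:=\sigma_1^{e_k}\in\Br_2^+$, so that $\tilde\gamma=\sigma_{i_k}^{e_k}$ and $\beta=\beta_1\tilde\gamma\beta_2$. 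The proposition then gives
\[
R_{\beta^>}(z)=zR_{(\sigma_1^{e_k})^>}(z)\,R_{(\beta_1\beta_2)^>}(z).
\]
The factor $(\sigma_1^{e_k})^>$ is the $A_{e_k-1}$ Legendrian. Its normalized ruling polynomial is $N_{A_{e_k-1}}$, which is log-concave with no internal zeros by the type $A$ case of Theorem~\ref{thm:log-concavity_for_ADE}, or by the explicit product formula of Remark~\ref{rem:A_odd_factorization}, or by the torus-knot case. Here we should check the edge case $e_k=1$, where the closure is a Legendrian unknot with normalized polynomial $1$.

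Now $\beta_1\beta_2\in\Br_{i_k}^+$ is again of the required form. Merge the two syllables $\sigma_{i_{k-1}}^{e_{k-1}}\sigma_{i_{k+1}}^{e_{k+1}}$ if $i_{k-1}=i_{k+1}$; otherwise the larger of the two becomes the new peak. Either way the index sequence remains unimodal with fewer syllables, so induction applies. The base case is $M\le1$, a single $\sigma_i^e$, which is handled by the splitting remark and type $A$.

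Finally we check that the normalizations are compatible. We have $\widetilde R=z^bR$ with $b(\beta^>)=b(\gamma^>)+b((\beta_1\beta_2)^>)-1$. This holds because the component counts of permutations combine the same way the extra $z$ appears. Hence $\widetilde R_{\beta^>}=\widetilde R_{\gamma^>}\,\widetilde R_{(\beta_1\beta_2)^>}$, and the multiplicativity of log-concavity finishes the argument.

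The main obstacle I expect is this component bookkeeping, together with verifying that the hypothesis ``all indices outside $\gamma$ are $<n$'' is exactly what the proposition needs. I would verify the component identity directly from $\ms_\beta$: the strand $i_k+1$ only meets strand $i_k$ inside $\tilde\gamma$.
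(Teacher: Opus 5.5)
Your proposal is correct and follows the paper's own argument: you peel off the unique peak syllable as $\tilde\gamma=\sigma_{i_k}^{e_k}$ via Proposition~\ref{prop:multiplicativity_for_ruling_polynomials}, handle the two-strand factor $(\sigma_1^{e_k})^{>}$ by the type $A$ case, and induct on the remaining unimodal braid $\beta_1\beta_2$. Your explicit check that $b(\beta^{>})=b(\gamma^{>})+b((\beta_1\beta_2)^{>})-1$ (the two cycles through strand $i_k$ merge) and your reduction to the case where all generators occur are details the paper leaves implicit; for the reduction, induct on $M+N$ rather than on $M$ alone, since splitting off trivial strands does not lower $M$.
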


\begin{proof}
By Theorem \ref{thm:log-concavity_for_ADE}, Conjecture~\ref{conj:log-concavity_of_ruling_polynomials} holds for the rainbow closures $(\sigma_1^n)^{>}$ of all $2$-strand positive braids. Now, apply Proposition~\ref{prop:multiplicativity_for_ruling_polynomials} inductively along the unique peak in the index sequence.
\end{proof}

\subsection*{Acknowledgements}
\addtocontents{toc}{\protect\setcounter{tocdepth}{1}}

We thank the following people and institutions for their hospitality, where parts of this work were presented and developed: Yu Pan (Center for Applied Mathematics, Tianjin University); Jun Zhang and Yongqiang Liu (Institute of Geometry and Physics, USTC); Jie Zhou and Dingxin Zhang (YMSC); and Michael McBreen and Conan Leung (IMS, CUHK).
We also thank Botong Wang and Laurentiu Maxim for helpful conversations.
The first author is grateful to Vivek Shende, David Nadler, and Lenhard Ng for their continued support and encouragement.

\addtocontents{toc}{\protect\setcounter{tocdepth}{2}}

\end{document}